\def\beq{\begin{equation}}
\def\eeq{\end{equation}}
\def\ba{\begin{array}}
\def\ea{\end{array}}
\newtheorem{thm}{Theorem}[section]
\newtheorem{lm}[thm]{Lemma}
\newtheorem{crl}[thm]{Corollary}
\theoremstyle{definition}
\newtheorem{rem}[thm]{Remark}
\theoremstyle{remark}
\begin{document}
\pagestyle{plain}
\title{Existence and convergence of ground state solutions for Choquard-type systems on lattice graphs}

\author{Lidan Wang}
\email{wanglidan@ujs.edu.cn}
\address{Lidan Wang: School of Mathematical Sciences, Jiangsu University, Zhenjiang 212013, China}

\begin{abstract}
In this paper, we study the $p$-Laplacian system with  Choquard-type nonlinearity
$$
\begin{cases}-\Delta_{p} u+(\lambda a+1)|u|^{p-2} u=\frac{1}{\gamma} \left(R_\alpha\ast F(u,v)\right)F_{u}(u, v), \\ -\Delta_{p} v+(\lambda b+1)|v|^{p-2} v=\frac{1}{\gamma} \left(R_\alpha\ast F(u,v)\right)F_{v}(u, v),\end{cases}
$$
on lattice graphs $\mathbb{Z}^N$, where $\alpha \in(0,N),\,p\geq 2,\,\gamma> \frac{(N+\alpha)p}{2N},\,\lambda>0$ is a parameter and $R_{\alpha}$ is the Green's function of the discrete fractional Laplacian that behaves as the Riesz potential. Under some assumptions on the functions $a,\,b$ and $F$, we prove the existence and asymptotic behavior of ground state solutions  by the method of Nehari manifold.
\end{abstract}

\maketitle

{\bf Keywords:}  lattice graphs, $p$-Laplacian system, Choquard-type nonlinearity,  ground state solutions, Nehari manifold

\
\

{\bf Mathematics Subject Classification 2020:} 35J50, 35R02

\section{Introduction}

The existence, multiplicity and convergence of solutions for nonlinear Schr\"{o}dinger equations and Schr\"{o}dinger systems on Euclidean spaces have been extensively studied. See for examples \cite{AD,BW,BW1,F,L0} for a single nonlinear Schr\"{o}dinger equation. Moreover, we refer the readers to \cite{G,LP,LD,LL,SS} for the nonlinear Sch\"{o}dinger systems.

In recent years, there have been many works on the existence and convergence of ground state solutions for nonlinear Schr\"{o}dinger equations and Schr\"{o}dinger systems on graphs $G=(V,E)$, where $V$ is the vertex set and $E$ is the edge set. For example, Zhang and Zhao \cite{ZZ}
proved the existence and convergence of ground state solutions for the discrete nonlinear Schr\"{o}dinger equation
$$-\Delta u+(\lambda a+1)u=|u|^{p-1}u$$ under the assumptions on the potential function $a(x)$:
\begin{itemize}
    \item [$(a_1)$] $a(x)\geq 0$ and the potential well $\Omega=\{x\in V:a(x)=0\}$ is a non-empty, connected and bounded domain in $V$;
    \item[$(a_2)$] there exists $x_0\in V$ such that $a(x)\rightarrow 0$ as $d(x,x_0)\rightarrow\infty.$ 
\end{itemize}
After that, Han, Shao and Zhao \cite{HSZ} generalized the result in \cite{ZZ} to the discrete biharmonic Schr\"{o}dinger equation
$$\Delta^2u-\Delta u+(\lambda a+1)u=|u|^{p-1}u$$
under the conditions $(a_1)$ and $(a_2).$

Later, under the same conditions on $a(x)$, for $p\geq 2$, Han and Shao \cite{HS} studied the discrete $p$-Laplacian equation
$$-\Delta_p u+(\lambda a+1)u^{p-2}u=f(x,u),$$
where $\Delta_p u=\frac{1}{2\mu(x)}\sum\limits_{y\sim x}\omega_{xy}(|\nabla u|^{p-2}(y)+|\nabla u|^{p-2}(x))(u(y)-u(x)),$ and showed the existence and asymptotic behavior of ground state solutions.

Recently, Xu and Zhao \cite{XZ} investigated the existence and convergence of ground state solutions for the discrete Schr\"{o}dinger system 
$$\begin{cases}-\Delta u+(\lambda a+1) u=\frac{\alpha}{\alpha+\beta}|u|^{\alpha-2}u|v|^\beta, \\ -\Delta v+(\lambda b+1)v=\frac{\beta}{\alpha+\beta}|u|^{\alpha}|v|^{\beta-2}v,\end{cases}$$
where $\alpha>1,\,\beta>1$ and  the potential functions $a(x)$ and $b(x)$ satisfy
\begin{itemize}
    \item[$(A_{1})$] $a(x) \geq 0,\, b(x) \geq 0$, the potential wells $\Omega_{a}=\{x\in V: a(x)=0\},\, \Omega_{b}=\{x\in V: b(x)=0\}$ and $\Omega:=\Omega_{a} \cap \Omega_{b}$ are all non-empty, connected and bounded domains in $V$;
    \item[($A_2$)] there exists $x_0\in V$ such that $a(x)\rightarrow 0$ and $b(x)\rightarrow 0$ as $d(x,x_0)\rightarrow\infty.$ 
\end{itemize}
Under the assumptions $(A_1)$ and $(A_2)$, Shao \cite{S} considered the $p$-Laplacian system
$$\begin{cases}-\Delta_{p} u+(\lambda a+1)|u|^{p-2} u=\frac{1}{\gamma}F_{u}(u, v) , \\ -\Delta_{p} v+(\lambda b+1)|v|^{p-2} v=\frac{1}{\gamma}F_{v}(u, v),\end{cases}$$
where $F$ is a positively homogeneous function of degree $\gamma$ and $2\leq p< \gamma<\infty$, and obtained 
the existence and concentration behavior of ground state solutions. Furthermore, if $(A_2)$ is replaced by
\begin{itemize}
    \item [$(\tilde{A}_2)$] $(a(x)+1)^{-1}\in \ell^{\frac{1}{p-1}}(V)$ and $(b(x)+1)^{-1}\in \ell^{\frac{1}{q-1}}(V)$,
\end{itemize}
then under the assumptions $(A_1)$-$(\tilde{A}_2)$ and some conditions on $F$, Zhang and Zhang \cite{ZZ1} studied the $(p,q)$-Laplacian system on graphs
$$\begin{cases}-\Delta_{p} u+(\lambda a+1)|u|^{p-2} u=F_{u}(x,u, v) , \\ -\Delta_{q} v+(\lambda b+1)|v|^{q-2} v=F_{v}(x,u, v),\end{cases}$$
where $p,q >1$, and derived the existence and convergence of ground state solutions. For more related works about the Schr\"{o}diner equations on graphs, we refer the readers to \cite{GL,HLW,M,SYZ,W4}.

Nowadays, the discrete  nonlinear Choquard equation has attracted much attention from researchers. For example, under the hypotheses $(a_1)$ and $(a_2)$, Wang, Zhu and Wang \cite{WZW} established the
the existence and convergence of ground state solutions for the discrete nonlinear Choquard equation
$$-\Delta u+(\lambda a+1)u=(R_{\alpha}\ast|u|^{p})|u|^{p-2}u,$$
where $R_\alpha$ is the Green's function of the discrete fractional Laplacian. Now let
\begin{itemize}
    \item [$(\tilde{a}_{2})$] there exists $M>0$ such that $\{x\in V: a(x)\leq M\}$ is a finite and non-empty set in $V$.
\end{itemize}
Clearly, the condition $(\tilde{a}_2)$ is weaker than $(a_2)$. Li and Wang \cite{LW} established the existence and asymptotic behavior of ground state solutions for the discrete nonlinear Choquard equation
$$\Delta^{2}u-\Delta u+(\lambda a+1)u=(R_{\alpha}\ast|u|^{p})|u|^{p-2}u.$$
For more related works about the single nonlinear Choquard equation on graphs, we refer the readers to \cite{LZ1,LZ2,W1,W2,W3}. 

However, as far as we know, there are no such results for the nonlinear Choquard-type system on graphs. Inspired by the works mentioned above, in this paper, we study the existence and asymptotic behavior of ground state solutions for a class of $p$-Laplacian system with Choquard-type nonlinearity on lattice graphs $V=\mathbb{Z}^N$.

Let us first give some notations. Let $\Omega$ be a subset of $\mathbb{Z}^N$, we denote by $C(\Omega)$ the set of all functions on $\Omega$. The support of $u\in C(\Omega)$ is
defined as $\text{supp}(u):=\{x \in \Omega : u(x)\neq 0\}$. Let $C_{c}(\Omega)$ be the set of all functions with finite support on $\Omega$. Moreover, we denote by the $\ell^p(\Omega)$ the space of $\ell^p$-summable functions on $\Omega$. For convenience, for any $u\in C(\Omega)$, we always write
$
\int_{\Omega}u\mathrm{~d}\mu:=\sum\limits_{x\in \Omega}u(x),$ where $\mu$ is the counting measure on $\Omega$.

In this paper, we consider the following $p$-Laplacian system of the form
\begin{equation}\label{00}
\begin{cases}-\Delta_{p} u+(\lambda a+1)|u|^{p-2} u=\frac{1}{\gamma} \left(R_\alpha\ast F(u,v)\right)F_{u}(u, v) , \\ -\Delta_{p} v+(\lambda b+1)|v|^{p-2} v=\frac{1}{\gamma} \left(R_\alpha\ast F(u,v)\right)F_{v}(u, v),\end{cases}
\end{equation}
on lattice graph $\mathbb{Z}^N$, where $\alpha\in(0,N),\,p\geq 2$,\,$\gamma>\frac{(N+\alpha)p}{2N}$ and $ \lambda>0$ is a parameter. Here $\Delta_{p}$ is the discrete $p$-Laplacian defined as $$
\Delta_{p} u(x)= \sum_{y \sim x}|u(y)-u(x)|^{p-2}(u(y)-u(x)),
$$ and $R_\alpha$ is the Green's function of the discrete fractional Laplacian  defined by
$$R_{\alpha}(x,y)=\frac{1}{|\Gamma(-\alpha)|}\int_{0}^{\infty}k_t(x,y)\frac{dt}{t^{1-\alpha}},$$
where $\Gamma$ represents the Gamma function and $k_t(x,y)$ is the heat kernel of discrete Laplacian. We always assume that the functions $a, b$ and $F$ satisfy 
\begin{itemize}
    \item[$(A_{1})$] $a(x) \geq 0,\, b(x) \geq 0$, the potential wells $\Omega_{a}=\{x\in\mathbb{Z}^N: a(x)=0\},\, \Omega_{b}=\{x\in \mathbb{Z}^N: b(x)=0\}$ and $\Omega:=\Omega_{a} \cap \Omega_{b}$ are all non-empty bounded domains in $\mathbb{Z}^N$;
    \item[($A'_2$)] there exist $M_1,M_2>0$ such that the sets $\{x\in \mathbb{Z}^N: a(x)\leq M_1\}$ and  $\{x\in \mathbb{Z}^N: b(x)\leq M_2\}$ are finite and non-empty;
    \item[($F_1$)] $F\in C^{1}(\mathbb{R}^{2}, \mathbb{R}^{+})$ and $F(t u, t v)=t^{\gamma} F(u, v)$ for $(u, v) \in \mathbb{R}^{2}$, where $t>0$ and $\gamma>\frac{(N+\alpha)p}{2N}$.

\end{itemize}
It follows from ($F_{1}$) that, see \cite{LL,S}, 
\begin{itemize}
    \item [(i)] $u F_{u}(u, v)+v F_{v}(u, v)=\gamma F(u, v)$;
    \item[(ii)] for $(u, v) \in \mathbb{R}^{2}$, there exists $M_{F}>0$ such that
\begin{equation}\label{04}
|F(u, v)| \leq M_{F}\left(|u|^{\gamma}+|v|^{\gamma}\right), 
\end{equation}
where $M_{F}=\max \left\{F(u, v):|u|^{\gamma}+|v|^{\gamma}=1,(u, v) \in \mathbb{R}^{2}\right\}$;
\item[(iii)] $F_{u}(u, v), F_{v}(u, v)$ are positively homogeneous of degree $(\gamma-1)$.
\end{itemize}

 Let $W^{1,p}(\mathbb{Z}^N)$ be the completion of $C_c(\mathbb{Z}^N)$ with respect to the norm
$$
\|u\|_{W^{1, p}}=\left(\int_{\mathbb{Z}^N}\left(|\nabla u|^{p}+|u|^{p}\right)\mathrm{~d}\mu\right)^{\frac{1}{p}},
$$ where $$|\nabla u|^{p}:=|\nabla u|^{p}_p=\frac{1}{2}\sum\limits_{y\sim x}|u(y)-u(x)|^p,$$ see Section 2 for more details. Moreover, one gets easily that
$$\int_{\mathbb{Z}^N}|\nabla u|^p\mathrm{~d}\mu=\frac{1}{2}\sum_{x\in\mathbb{Z}^N}\sum\limits_{y\sim x}|u(y)-u(x)|^p\leq C_{N,p}\sum_{x\in\mathbb{Z}^N}|u(x)^p|=C_{N,p}\int_{\mathbb{Z}^N}|u|^p\mathrm{~d}\mu.$$
Hence we have that
$$\|u\|_p\leq \|u\|_{W^{1,p}}\leq (1+C_{N,p})\|u\|_p,$$
which implies that $\|\cdot\|_{W^{1,p}}$ and $\|\cdot\|_p$ are equivalent norms. Since $W^{1,p}(\mathbb{Z}^N)$ and $\ell^p(\mathbb{Z}^N)$ are the completion of $C_c(\mathbb{Z}^N)$ under the corresponding norms, we obtain that $W^{1,p}(\mathbb{Z}^N)=\ell^p(\mathbb{Z}^N)$, and hence $W^{1, p}(\mathbb{Z}^N)$ is a reflexive Banach space. 

For any function $h(x) \geq 0$ and $\lambda>0$, we define a subspace of $W^{1, p}(V)$, which is also a reflexive Banach space, 
$$
W_{\lambda, h}:=\left\{u \in W^{1, p}(V): \int_{\mathbb{Z}^N}(\lambda h+1)|u|^{p} \mathrm{~d} \mu<\infty\right\}
$$
under the norm
$$
\|u\|_{\lambda, h}=\left(\int_{\mathbb{Z}^N}\left(|\nabla u|^{p}+(\lambda h+1)|u|^{p}\right) \mathrm{~d} \mu\right)^{\frac{1}{p}}.
$$

Let $W_{\lambda}$ be the product space $W_{\lambda, a} \times W_{\lambda, b}$ with respecct to the norm
$$
\|(u, v)\|_{\lambda}=\left(\|u\|_{\lambda, a}^{p}+\|v\|_{\lambda, b}^{p} \right)^{p}.
$$
Clearly, $W_\lambda$ is also a reflexive Banach space.

The energy functional $J_{\lambda}(u,v): W_{\lambda}\rightarrow\mathbb{R}$ associated to the system (\ref{00}) is given by
$$
J_{\lambda}(u, v)=\frac{1}{p}\|(u, v)\|_{\lambda}^{p}-\frac{1}{2\gamma} \int_{\mathbb{Z}^N} \left(R_\alpha\ast F(u,v)\right)F(u, v) \mathrm{~d} \mu .
$$
Moreover, one gets easily that the functional $J_{\lambda} \in C^{1}(W_{\lambda}, \mathbb{R})$ and
$$
\left\langle J_{\lambda}^{\prime}(u, v),(u, v)\right\rangle=\|(u, v)\|_{\lambda}^{p}-\int_{\mathbb{Z}^N}\left(R_\alpha\ast F(u,v)\right)F(u, v) \mathrm{~d} \mu.
$$
We define the Nehari manifold as $$
\mathcal{N}_{\lambda}:=\left\{u \in W_\lambda \backslash\{0,0\}:\left\langle J_{\lambda}^{\prime}(u, v),(u, v)\right\rangle=0\right\}
.$$
We say that $(u,v)\in W_\lambda$ is a ground state solution to the system (\ref{00}), if $(u,v)$ is a nontrivial critical point of the energy functional $J_\lambda$ such that
$$
J_{\lambda}(u, v)=\inf _{\mathcal{N}_{\lambda}} J_{\lambda}=:m_{\lambda}.
$$

Now we state our first result, which is about the existence of ground state solutions to the system (\ref{00}).

\begin{thm}\label{th1}
  Let $(F_1)$ and $(A_1)$-$(A'_2)$ hold. Then there exists a $\lambda_0>0$ such that for any $\lambda\geq \lambda_0$ and $p \geq 2$, the system (\ref{00}) has a ground state solution $\left(u_{\lambda}, v_{\lambda}\right)$.  
\end{thm}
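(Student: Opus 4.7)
The plan is to apply the Nehari manifold method to $J_\lambda$, following the general framework of Shao [S] and Zhang--Zhang [ZZ1] and treating the nonlocal Choquard term by a discrete Hardy--Littlewood--Sobolev estimate on the Riesz-type kernel $R_\alpha$ as developed in Wang--Zhu--Wang [WZW]. First I would analyze the fibering map on $\mathcal{N}_\lambda$. For any $(u,v)\in W_\lambda\setminus\{(0,0)\}$, homogeneity $F(tu,tv)=t^\gamma F(u,v)$ yields
$$\langle J'_\lambda(tu,tv),(tu,tv)\rangle = t^p\|(u,v)\|_\lambda^p - t^{2\gamma}\int_{\mathbb{Z}^N}(R_\alpha\ast F(u,v))F(u,v)\,d\mu,$$
and since $2\gamma>p$ there is a unique $t(u,v)>0$ placing $(tu,tv)$ on $\mathcal{N}_\lambda$. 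On $\mathcal{N}_\lambda$ the functional simplifies to $J_\lambda=\bigl(\tfrac{1}{p}-\tfrac{1}{2\gamma}\bigr)\|(u,v)\|_\lambda^p$, so it is coercive. Combining the discrete HLS estimate, the bound $(F_1)$(ii), and the lattice embedding $\ell^p\hookrightarrow \ell^q$ for $q\geq p$ (valid precisely because $\gamma>(N+\alpha)p/(2N)$ forces the critical exponent $\tfrac{2N\gamma}{N+\alpha}\geq p$) delivers a uniform positive lower bound on $\|(u,v)\|_\lambda$ over $\mathcal{N}_\lambda$, and hence $m_\lambda>0$.

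Next, take a minimizing sequence $\{(u_n,v_n)\}\subset\mathcal{N}_\lambda$ with $J_\lambda(u_n,v_n)\to m_\lambda$; the energy formula on $\mathcal{N}_\lambda$ directly bounds $\|(u_n,v_n)\|_\lambda$. By reflexivity of $W_\lambda$, pass to a subsequence with $(u_n,v_n)\rightharpoonup (u_\lambda,v_\lambda)$ weakly in $W_\lambda$ and pointwise on $\mathbb{Z}^N$. Compactness is supplied by $(A'_2)$: since $\{a\leq M_1\}$ and $\{b\leq M_2\}$ are finite, outside a large ball $B_R$ one has $\lambda a+1\geq\lambda M_1+1$ and $\lambda b+1\geq\lambda M_2+1$, so
$$\sum_{|x|>R}|u_n(x)|^p \leq \frac{1}{\lambda M_1+1}\|u_n\|_{\lambda,a}^p,$$
and similarly for $v_n$. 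Choosing $\lambda\geq\lambda_0$ large makes these tails uniformly small in $n$; combined with pointwise convergence on the finite complement $B_R$, this upgrades weak convergence to strong convergence in $\ell^p\times\ell^p$, and by interpolation in $\ell^q\times\ell^q$ for every $q\geq p$.

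With strong $\ell^q$ convergence in hand, I can pass to the limit in the nonlocal term. Using the Riesz-type bound $R_\alpha(x,y)\leq C(1+|x-y|)^{\alpha-N}$ (proved in [WZW]), $(F_1)$(ii), and strong convergence at the exponent $q=2N\gamma/(N+\alpha)$, the discrete Hardy--Littlewood--Sobolev inequality gives
$$\int_{\mathbb{Z}^N}(R_\alpha\ast F(u_n,v_n))F(u_n,v_n)\,d\mu \longrightarrow \int_{\mathbb{Z}^N}(R_\alpha\ast F(u_\lambda,v_\lambda))F(u_\lambda,v_\lambda)\,d\mu.$$
The same estimate applied to $\langle J'_\lambda(u_n,v_n),(\varphi,\psi)\rangle$ for compactly supported test pairs, together with the lower bound on $\|\cdot\|_\lambda$ over $\mathcal{N}_\lambda$, shows $(u_\lambda,v_\lambda)$ is nontrivial and is a critical point of $J_\lambda$ lying in $\mathcal{N}_\lambda$. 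Weak lower semicontinuity of $\|\cdot\|_\lambda$ and the identity $J_\lambda=\bigl(\tfrac{1}{p}-\tfrac{1}{2\gamma}\bigr)\|\cdot\|_\lambda^p$ on $\mathcal{N}_\lambda$ then force $J_\lambda(u_\lambda,v_\lambda)=m_\lambda$.

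The main obstacle is the double-sum structure of the Choquard term, which couples values of $(u_n,v_n)$ at distant lattice points; unlike the local nonlinearity treated in [S] or the componentwise nonlinearity in [ZZ1], pointwise convergence and weak compactness alone are insufficient. Everything hinges on the interplay between the quantitative tail decay from $(A'_2)$ with $\lambda\geq\lambda_0$, which supplies strong $\ell^q$ convergence, and the off-diagonal decay of $R_\alpha$, which together with the strictly subcritical range $\gamma>(N+\alpha)p/(2N)$ controls the convolution. A Brezis--Lieb-type splitting on the lattice may be needed if one wants to bypass the strong convergence and work directly with weak convergence plus decomposition of the nonlocal interaction.
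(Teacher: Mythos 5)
The central gap is in your compactness step. Under $(A'_2)$ the estimate you write,
$$\sum_{|x|>R}|u_n(x)|^p\;\leq\;\frac{1}{\lambda M_1+1}\,\|u_n\|_{\lambda,a}^p\;\leq\;\frac{C}{\lambda M_1+1},$$
is correct once $B_R$ contains the finite set $\{a\leq M_1\}$, but for a \emph{fixed} $\lambda$ the right-hand side does not decrease as $R\to\infty$: it is a fixed positive constant of order $1/\lambda$. Tails that are ``uniformly small'' of size $O(1/\lambda)$ are not tails that vanish, so this does not upgrade weak convergence to strong convergence in $\ell^p\times\ell^p$; the embedding $W_\lambda\hookrightarrow\ell^p$ is genuinely non-compact under $(A'_2)$, which is exactly the difficulty the paper flags in Remark 1.3(ii) as distinguishing $(A'_2)$ from $(A_2)$ or $(\tilde A_2)$. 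Consequently your passage to the limit in the Choquard term, and hence the identification $J_\lambda(u_\lambda,v_\lambda)=m_\lambda$, is unjustified as written.

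What the paper does instead, and what your argument is missing, is an energy quantization. Lemma \ref{pb} shows that every $(PS)_c$ sequence of $J_\lambda$ satisfies $c=0$ or $c\geq c_0$ for a constant $c_0>0$ \emph{independent of} $\lambda$, with $\lim_k\|(u_k,v_k)\|_\lambda^p=\lim_k\int_V(R_\alpha\ast F)F\,\mathrm{d}\mu=\frac{2\gamma pc}{2\gamma-p}$. The Br\'ezis--Lieb splitting of Lemma \ref{lm11} (built on the nonlocal splitting Lemma \ref{pa}) shows that the remainder $(u_k-u,v_k-v)$ is itself a $(PS)_d$ sequence at level $d=c-J_\lambda(u,v)$, so the energy lost to infinity is either $0$ or at least $c_0$; the tail estimate (Lemma \ref{lm12}, applied to the Choquard term rather than to $\ell^p$ norms) caps that lost energy by an $\varepsilon<\frac{2\gamma pc_0}{2\gamma-p}$ once $\lambda\geq\lambda_0$, and the dichotomy then forces $d=0$ --- only at that point does strong convergence follow. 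You mention a Br\'ezis--Lieb splitting only as an optional afterthought; it is in fact the essential mechanism, and without it (or an equivalent quantization argument) your proof does not close. A secondary loose end: you extract a minimizing sequence on $\mathcal{N}_\lambda$ but then test $J'_\lambda$ along it as if it were a Palais--Smale sequence; the paper instead produces a genuine $(PS)_{m_\lambda}$ sequence from the mountain-pass geometry (Lemma \ref{lm6}), which avoids having to justify that step.
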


Let $\Omega\subset\mathbb{Z}^N$ be a bounded domain. We define the vertex boundary of $\Omega$ by $$\partial\Omega=\{y\in \mathbb{Z}^N, y\not\in\Omega:\exists~x\in\Omega~ \text{such~that}~y\sim x \}.$$
We denote $\bar{\Omega}:=\Omega\cup\partial\Omega$ and $\Omega^c=\mathbb{Z}^N\backslash\Omega$. In order to study the convergence of $(u_\lambda,v_\lambda)$ as $\lambda\rightarrow\infty$, we consider the following system  \begin{equation}\label{01}
\begin{cases}-\Delta_{p} u+|u|^{p-2} u=\frac{1}{\gamma}\left(R_\alpha\ast F(u,v)\right) F_{u}(u, v), & x\in\Omega_{a},  \\ -\Delta_{p} v+|v|^{p-2} v=\frac{1}{\gamma} \left(R_\alpha\ast F(u,v)\right)F_{v}(u, v), & x\in\Omega_{b}, \\ u =0, &x\in\partial\Omega_{a},\\  v =0, &x\in\partial\Omega_{b}.
\end{cases}
\end{equation}

Let $W_{0}^{1, p}(\Omega)$ be the completion of $C_{c}(\Omega)$ under the norm
$$
\|u\|_{W_{0}^{1, p}(\Omega)}=\left(\int_{\bar{\Omega}}|\nabla u|^{p} \mathrm{~d} \mu+\int_{\Omega}|u|^{p} \mathrm{~d} \mu\right)^{\frac{1}{p}} .
$$
Moreover, we define $W_{\Omega}$ as the product space $W_{0}^{1, p}\left(\Omega_{a}\right) \times W_{0}^{1, p}\left(\Omega_{b}\right)$
under the inner product
$$
\begin{aligned}
&\langle(u, v),(\phi, \psi)\rangle_{W_{\Omega}}\\= & \int_{\bar{\Omega}_{a} \cup \bar{\Omega}_{b}}\left(|\nabla u|^{p-2} \nabla u \nabla \phi+|\nabla v|^{p-2} \nabla v \nabla \psi\right) \mathrm{~d} \mu \\
& +\int_{\Omega_{a} \cup \Omega_{b}}\left(|u|^{p-2} u \phi+|v|^{p-2} v \psi\right) \mathrm{~d} \mu,\quad (u,v),(\phi,\psi)\in W_\Omega.
\end{aligned}
$$

The energy functional $J_{\Omega}(u,v): W_\Omega\rightarrow\mathbb{R}$ related to the system (\ref{01}) is
\begin{align*}
J_{\Omega}(u, v)= & \frac{1}{p} \int_{\bar{\Omega}_{a} \cup \bar{\Omega}_{b}}\left(|\nabla u|^{p}+|\nabla v|^{p}\right) \mathrm{~d} \mu \\
& +\frac{1}{p} \int_{\Omega_{a} \cup \Omega_{b}}\left(|u|^{p}+|v|^{p}\right) \mathrm{~d} \mu-\frac{1}{2\gamma} \int_{\Omega_{a} \cup \Omega_{b}} \left(R_\alpha\ast F(u,v)\right)F(u, v)  \mathrm{~d} \mu .
\end{align*}
Similarly, $(u,v)\in W_\Omega$ is a ground state solution of the system (\ref{01}) if
$(u,v)$ is a nontrivial critical point of $J_\Omega$ such that $$J_\Omega(u,v)=\inf\limits_{\mathcal{N}_\Omega} J_\Omega:=m_\Omega,$$
where $\mathcal{N}_{\Omega}=\{u\in W_\Omega\backslash\{(0,0)\}: (J'_\Omega(u,v),(u,v))=0\}$.

Our second result is about the asymptotic behavior of ground state solutions as $\lambda\rightarrow\infty$.

\begin{thm}\label{th4}
 Let $(F_1)$ and $(A_1)$-$(A'_2)$ hold.  Then for any sequence $\lambda_{k} \rightarrow \infty$, up to a subsequence, the corresponding ground state solutions $(u_{k}, v_{k})$ of the system (\ref{00}) converge in $W^{1, p}(V) \times W^{1, p}(V)$ to a ground state solution of the system (\ref{01}).   
\end{thm}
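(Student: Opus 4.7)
The plan is the standard potential-well concentration argument for $\lambda\to\infty$, adapted to the Choquard nonlinearity on $\mathbb{Z}^N$; the Nehari identity together with the homogeneity of $F$ from $(F_1)$ carries most of the argument. First I would establish a uniform bound on $(u_k,v_k)$. Since $(u_k,v_k)\in\mathcal{N}_{\lambda_k}$ and the Choquard functional $(u,v)\mapsto\int_{\mathbb{Z}^N}(R_\alpha\ast F(u,v))F(u,v)\,d\mu$ is positively homogeneous of degree $2\gamma$, subtracting $\frac{1}{2\gamma}\langle J'_{\lambda_k}(u_k,v_k),(u_k,v_k)\rangle$ from $J_{\lambda_k}(u_k,v_k)$ gives
$$
m_{\lambda_k}=J_{\lambda_k}(u_k,v_k)=\left(\frac{1}{p}-\frac{1}{2\gamma}\right)\|(u_k,v_k)\|_{\lambda_k}^p.
$$
Any minimizer $(\phi,\psi)\in\mathcal{N}_\Omega$, extended by zero outside $\Omega_a\times\Omega_b$, lies in $\mathcal{N}_{\lambda_k}$ since $a\equiv b\equiv 0$ on $\Omega$, so $m_{\lambda_k}\le m_\Omega$ uniformly in $k$, which combined with the identity gives $\|(u_k,v_k)\|_{\lambda_k}\le C$. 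A fortiori $(u_k,v_k)$ is bounded in $W^{1,p}(\mathbb{Z}^N)\times W^{1,p}(\mathbb{Z}^N)$, so after passing to a subsequence $(u_k,v_k)\rightharpoonup(u,v)$ weakly, and since $W^{1,p}(\mathbb{Z}^N)=\ell^p(\mathbb{Z}^N)$, also pointwise on $\mathbb{Z}^N$.

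Next I show $(u,v)\in W_\Omega$ and is a weak solution of (\ref{01}). From $\int_{\mathbb{Z}^N}(\lambda_k a+1)|u_k|^p\,d\mu\le C$ it follows that $\int_{\mathbb{Z}^N}a|u_k|^p\,d\mu\le C/\lambda_k\to 0$, and Fatou's lemma forces $\int_{\mathbb{Z}^N}a|u|^p\,d\mu=0$, so $u\equiv 0$ on $\Omega_a^c$; symmetrically $v\in W_0^{1,p}(\Omega_b)$, whence $(u,v)\in W_\Omega$. Testing the equation $\langle J'_{\lambda_k}(u_k,v_k),(\phi,\psi)\rangle=0$ against $(\phi,\psi)\in C_c(\Omega_a)\times C_c(\Omega_b)$, the locally finite nature of $\Delta_p$ combined with pointwise convergence handles the differential and linear terms. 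For the nonlocal term, hypothesis $(A'_2)$ gives tightness: $\int_{\{a>M_1\}}|u_k|^p\,d\mu\le C/(\lambda_k M_1+1)\to 0$ while $\{a\le M_1\}$ is finite, and similarly for $v_k$, so by $(F_1)$ and the bound $|F(u,v)|\le M_F(|u|^\gamma+|v|^\gamma)$ one obtains $F(u_k,v_k)\to F(u,v)$ strongly in $\ell^{2N/(N+\alpha)}(\mathbb{Z}^N)$, an exponent permitted by $\gamma>(N+\alpha)p/(2N)$. The discrete Hardy--Littlewood--Sobolev inequality for $R_\alpha$ then allows passage to the limit in the Choquard term, identifying $(u,v)$ as a weak solution of (\ref{01}).

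To conclude I upgrade to strong convergence and verify minimality. Non-triviality of $(u,v)$ follows from the standard lower Nehari bound $\|(u_k,v_k)\|_{\lambda_k}\ge c>0$ (a consequence of the HLS estimate for the Choquard term together with $2\gamma>p$) combined with the tightness above, so $(u,v)\in\mathcal{N}_\Omega$. Applying weak lower semicontinuity and the Nehari identity on each level,
\begin{align*}
m_\Omega &\le J_\Omega(u,v)=\left(\frac{1}{p}-\frac{1}{2\gamma}\right)\|(u,v)\|_{W_\Omega}^p\\
&\le\liminf_{k\to\infty}\left(\frac{1}{p}-\frac{1}{2\gamma}\right)\|(u_k,v_k)\|_{\lambda_k}^p=\liminf_{k\to\infty}m_{\lambda_k}\le m_\Omega,
\end{align*}
so the chain collapses to equalities: $J_\Omega(u,v)=m_\Omega$ and $\|(u_k,v_k)\|_{\lambda_k}^p\to\|(u,v)\|_{W_\Omega}^p$. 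Convergence of $p$-norms plus weak convergence in the uniformly convex space $\ell^p$ (applied separately to the gradient and to the zero-order weighted part, after checking that the tails carried by $\lambda_k a,\,\lambda_k b$ vanish in the limit) upgrades to strong convergence in $W^{1,p}(\mathbb{Z}^N)\times W^{1,p}(\mathbb{Z}^N)$.

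The main obstacle is precisely the limit passage in the nonlocal Choquard term: its long-range kernel $R_\alpha$ couples distant vertices, and weak convergence in $W^{1,p}$ alone is insufficient. The resolution hinges entirely on the tightness supplied by the growing potentials under $(A'_2)$; once tightness is in hand, the discrete Hardy--Littlewood--Sobolev inequality delivers the continuity of the Choquard functional along the sequence, after which the homogeneity identity on the Nehari manifold renders the remaining bookkeeping routine.
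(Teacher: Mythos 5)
Your proposal is correct and follows the same overall skeleton as the paper (bound $m_{\lambda_k}\le m_\Omega$ via $\mathcal N_\Omega\subset\mathcal N_{\lambda_k}$, extract a weak limit vanishing outside the potential wells, pass to the limit in the equation, identify the limit energy, upgrade to strong convergence), but two key steps are executed by genuinely different, and in fact more direct, arguments than the paper's. First, for the strong convergence $u_k\to u$ in $\ell^q$: you split $\mathbb Z^N$ into the finite set $\{a\le M_1\}$ (where pointwise convergence suffices) and its complement, on which $\int_{\{a>M_1\}}|u_k|^p\,d\mu\le C/(\lambda_k M_1+1)\to 0$ while $u\equiv 0$ there; this directly yields $\ell^p$-convergence. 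The paper instead argues by contradiction through the Lions lemma (Lemma \ref{lm16}), producing a concentration point $x_k$ with $|x_k|\to\infty$ and deriving $m_{\lambda_k}\to\infty$, which only gives $\ell^q$-convergence for $q>p$; your version is cleaner and gives more. Second, for the identification $\lim m_{\lambda_k}=m_\Omega$: the paper proves this as a separate lemma (Lemma \ref{pc}) by contradiction, using the projection Lemma \ref{l9} to place $t(u,v)$ on $\mathcal N_\Omega$ for some $t\in(0,1]$ because at that stage $(u,v)$ is not yet known to be a critical point. You instead first verify that $(u,v)$ is a nontrivial weak solution of (\ref{01}), hence lies in $\mathcal N_\Omega$ exactly, and then close the argument with the sandwich $m_\Omega\le J_\Omega(u,v)\le\liminf m_{\lambda_k}\le m_\Omega$ via weak lower semicontinuity; this absorbs Lemma \ref{pc} into the main proof and avoids the projection step. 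For the final upgrade to strong convergence the paper uses a Br\'ezis--Lieb decomposition of the norms, while you invoke norm convergence plus uniform convexity; in fact, since your tightness step already gives $u_k\to u$ and $v_k\to v$ strongly in $\ell^p=W^{1,p}(\mathbb Z^N)$ with equivalent norms, the $W^{1,p}\times W^{1,p}$ convergence is immediate and neither device is really needed. The one cosmetic imprecision is phrasing $m_{\lambda_k}\le m_\Omega$ in terms of "any minimizer'' of $m_\Omega$; the set inclusion $\mathcal N_\Omega\subset\mathcal N_{\lambda_k}$ (via zero extension) is all that is required and does not presuppose that $m_\Omega$ is attained.
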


\begin{rem}
  \begin{itemize}
      \item [(i)] In this paper, the definition of $p$-Laplacian is different from that in \cite{HS,S}, and thus the formula of integration by parts does not work for our results. Luckily, we also give a formula of integration by parts based on our definition of $p$-Laplacian, see Section 2;
      \item[(ii)] The authors in \cite{XZ,S} established the existence and convergence of ground state solutions under the assumptions on $(A_1)$ and $(A_2)$, and Zhang-Zhang \cite{ZZ1} obtained similar results under the conditions on $(A_1)$ and $(\tilde{A}_2)$. Note that the condition $(A_2)$ or $(\tilde{A}_2)$ guarantees a compact embedding, which plays a key role in their papers. However the condition $(A'_2)$ is weaker than the condition $(A_2)$ or $(\tilde{A}_2)$, which leads to the lack of compactness. Therefore, we have to seek for other method to overcome this difficulty;
      \item[(iii)] To the best of our knowledge, this is a first work to study the Choquard-type system on graphs. Moreover, we would like to say that we can obtain similar results under the assumptions on $(A_1)$ and $(A_2)$ or $(\tilde{A}_2)$.
  \end{itemize}  
\end{rem}

This paper is organized as follows. In Section 2, we state  some basic results in this paper. In Section 3, we establish the existence of ground state solutions to the system (\ref{00})(Theorem \ref{th1}). In Section 4, we prove the convergence of the ground state solutions of the system (\ref{00})(Theorem \ref{th4}).

\section{ Preliminaries}
In this section, we state some basic results on graphs. Let $G=(V, E)$ be a connected, locally finite graph, where $V$ denotes the vertex set and $E$ denotes the edge set. We call vertices $x$ and $y$ neighbors, denoted by $x \sim y$, if there exists an edge connecting them, i.e. $(x, y) \in E$. For any $x,y\in V$, the distance $d(x,y)$ is defined as the minimum number of edges connecting $x$ and $y$, namely
$$d(x,y)=\inf\{k:x=x_0\sim\cdots\sim x_k=y\}.$$
Let $B_{r}(a)=\{x\in V: d(x,a)\leq r\}$ be the closed ball of radius $r$ centered at $a\in V$. For brevity, we write $B_{r}:=B_{r}(0)$.

In this paper, we consider, the natural discrete model of the Euclidean space, the integer lattice graph.  The $N$-dimensional integer lattice graph, denoted by $\mathbb{Z}^N$, consists of the set of vertices $V=\mathbb{Z}^N$ and the set of edges $E=\{(x,y): x,\,y\in\mathbb{Z}^N,\,\underset {{i=1}}{\overset{N}{\sum}}|x_{i}-y_{i}|=1\}.$
In the sequel, we denote $|x-y|:=d(x,y)$ on the lattice graph $\mathbb{Z}^N$.

Let $C(\mathbb{Z}^N)$ be the set of all functions on $\mathbb{Z}^N$. The Laplacian of $u\in C(\mathbb{Z}^N)$ is defined as
\begin{eqnarray*}
\Delta u(x)=\underset {y\sim x}{\sum}\left(u(y)-u(x)\right).
\end{eqnarray*} 
The associated gradient form is given by
$$
\Gamma(u, v)(x)=\frac{1}{2} \sum\limits_{y \sim x}(u(y)-u(x))(v(y)-v(x)):=\nabla u\nabla v.
$$
We write $\Gamma(u)=\Gamma(u, u)$ and denote the length of this gradient as
$$
|\nabla u(x)|=\sqrt{\Gamma(u)(x)}=\left(\frac{1}{2} \sum\limits_{y \sim x}(u(y)-u(x))^{2}\right)^{\frac{1}{2}}.
$$

For $p\geq 2$, we define the $p$-Laplacian of $u\in C(\mathbb{Z}^N)$ by
\begin{equation}\label{k5}
\Delta_{p} u(x)= \sum_{y \sim x}|u(y)-u(x)|^{p-2}(u(y)-u(x)),
\end{equation} 
and the associated gradient form by
$$|\nabla u|^{p-2}\nabla u\nabla v=:\frac{1}{2}\sum\limits_{y\sim x}|u(y)-u(x)|^{p-2}(u(y)-u(x))(v(y)-v(x)).$$
Moreover, we denote the $p$-norm of the gradient as $$|\nabla u(x)|_p:=\left(\frac{1}{2}\sum\limits_{y\sim x}|u(y)-u(x)|^p\right)^{\frac{1}{p}}.$$
Clearly, for $p=2$, we get the usual Laplacian and the length of the gradient on lattice graphs $\mathbb{Z}^N$.

The space $\ell^{p}(\mathbb{Z}^N)$ is defined as $
\ell^{p}(\mathbb{Z}^N)=\left\{u \in C(\mathbb{Z}^N):\|u\|_{p}<\infty\right\},
$ where
$$
\|u\|_{p}= \begin{cases}\left(\sum\limits_{x \in \mathbb{Z}^N}|u(x)|^{p}\right)^{\frac{1}{p}}, &  1 \leq p<\infty, \\ \sup\limits_{x \in \mathbb{Z}^N}|u(x)|, & p=\infty.\end{cases}
$$

Note that the definition of the operator $\Delta_{p}$  (\ref{k5}) differs from that in \cite{HS}. In the following, we establish the formula of integration by parts for this $p$-Laplacian on graphs. 

\begin{lm}\label{l1}
 Let $u \in W^{1, p}(V)$. Then for any $v \in C_{c}(V)$, we have

$$
\int_{V}|\nabla u|^{p-2} \nabla u \nabla v \mathrm{~d} \mu=-\int_{V}(\Delta_{p} u) v \mathrm{~d} \mu .
$$   
\end{lm}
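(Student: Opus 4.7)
The plan is a direct computation that mirrors the standard graph-Laplacian integration-by-parts argument, but adapted to the nonlinear symbol $|u(y)-u(x)|^{p-2}(u(y)-u(x))$. The whole proof rests on two observations: (i) the integrand on the left-hand side is symmetric under the swap $x \leftrightarrow y$, and (ii) since $v\in C_c(V)$ and the graph is locally finite, every sum that appears has only finitely many nonzero terms, so one may freely interchange orders of summation.

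First I would unwind the definitions and write
\[
\int_{V}|\nabla u|^{p-2}\nabla u\nabla v\,\mathrm{d}\mu
=\frac{1}{2}\sum_{x\in V}\sum_{y\sim x}F(x,y)\bigl(v(y)-v(x)\bigr),
\]
where $F(x,y):=|u(y)-u(x)|^{p-2}(u(y)-u(x))$. The key structural fact is that $F$ is antisymmetric, $F(y,x)=-F(x,y)$, because $|u(y)-u(x)|^{p-2}$ is symmetric while $(u(y)-u(x))$ changes sign. Correspondingly $F(x,y)(v(y)-v(x))$ is symmetric, which is what makes the factor $\tfrac12$ appear naturally.

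Next I would split the right-hand side into the two pieces involving $v(y)$ and $v(x)$ respectively. In the piece containing $v(y)$, I would relabel by swapping $x$ and $y$ and use antisymmetry of $F$:
\[
\tfrac12\sum_{x}\sum_{y\sim x}F(x,y)v(y)
=\tfrac12\sum_{y}\sum_{x\sim y}F(y,x)v(x)
=-\tfrac12\sum_{x}\sum_{y\sim x}F(x,y)v(x).
\]
Combining this with the other piece gives
\[
-\sum_{x\in V}v(x)\sum_{y\sim x}|u(y)-u(x)|^{p-2}(u(y)-u(x))
=-\int_{V}(\Delta_{p}u)\,v\,\mathrm{d}\mu,
\]
which is the desired identity.

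The only step that requires any care is the change of order of summation. Here it is painless: since $\operatorname{supp}(v)$ is finite and each $x\in\mathbb{Z}^N$ has at most $2N$ neighbors, the set of pairs $(x,y)$ with $y\sim x$ and $\{x,y\}\cap\operatorname{supp}(v)\ne\emptyset$ is finite, so every sum that appears is a finite sum and all rearrangements are trivially valid. The hypothesis $u\in W^{1,p}(V)$ is not even needed for absolute convergence, only to place the formula in the functional-analytic setting in which it will later be used; local finiteness of the graph and compact support of $v$ carry the entire argument.
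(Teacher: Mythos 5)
Your proposal is correct and follows essentially the same argument as the paper: both proofs rest on the antisymmetry of $|u(y)-u(x)|^{p-2}(u(y)-u(x))$ under the swap $x\leftrightarrow y$ together with a relabeling of the double sum, the only cosmetic difference being that you start from the gradient side and split, while the paper starts from $\int_V(\Delta_p u)v\,\mathrm{d}\mu$ and symmetrizes by averaging the two relabeled forms. Your explicit justification of the rearrangement via the finite support of $v$ and local finiteness is a welcome addition that the paper leaves implicit.
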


\begin{proof}
For any $v\in C_c(\mathbb{Z}^N)$, we have that
$$\begin{aligned}
\int_{V}(\Delta_{p} u) v \mathrm{~d} \mu=&\sum\limits_{x\in V}\left[\sum\limits_{y\sim x}|u(y)-u(x)|^{p-2}(u(y)-u(x))\right]v(x)\\=& \sum\limits_{y\in V}\left[\sum\limits_{x\sim y}|u(x)-u(y)|^{p-2}(u(x)-u(y))\right]v(y).   
\end{aligned}$$
Adding together the last two lines and dividing by 2, we obtain that
$$\begin{aligned}
\int_{V}(\Delta_{p} u) v \mathrm{~d} \mu=&\frac{1}{2}\sum\limits_{x\in V}\sum\limits_{y\sim x}|u(y)-u(x)|^{p-2}(u(y)-u(x))(v(x)-v(y))\\=&-\frac{1}{2}\sum\limits_{x\in V}\sum\limits_{y\sim x}|u(y)-u(x)|^{p-2}(u(y)-u(x))(v(y)-v(x))\\=&-\sum\limits_{x\in V}\left[\frac{1}{2}\sum\limits_{y\sim x}|u(y)-u(x)|^{p-2}(u(y)-u(x))(v(y)-v(x))\right]\\=&- \int_{V} |\nabla u|^{p-2} \nabla u \nabla v \mathrm{~d} \mu.
\end{aligned}$$

\end{proof}

By Lemma \ref{l1}, one gets easily the following  result.
\begin{lm}\label{l2}
  Let $u \in W^{1, p}(\Omega)$. Then for any $v \in C_{c}(\Omega)$, we have $$
\int_{\bar{\Omega}}|\nabla u|^{p-2} \nabla u \nabla v \mathrm{~d} \mu=-\int_{\Omega}\left(\Delta_{p} u\right) v \mathrm{~d} \mu,
$$  
where $\Omega \subset V$ is a bounded domain. 
\end{lm}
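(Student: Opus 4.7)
The plan is to reduce Lemma \ref{l2} directly to Lemma \ref{l1} by extending $v$ by zero to the whole lattice $\mathbb{Z}^N$. Since $v \in C_c(\Omega)$, the trivial extension $\tilde v$ defined by $\tilde v(x) = v(x)$ for $x \in \Omega$ and $\tilde v(x) = 0$ otherwise still has finite support, hence $\tilde v \in C_c(V)$. Then Lemma \ref{l1} gives
$$\int_{V} |\nabla u|^{p-2}\nabla u \nabla \tilde v \,\mathrm{d}\mu = -\int_{V} (\Delta_p u)\,\tilde v \,\mathrm{d}\mu,$$
so the whole task is to show that each side of this identity agrees with the corresponding side in Lemma \ref{l2}.

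For the right-hand side this is immediate: $\tilde v$ vanishes on $V \setminus \Omega$, so $\int_V (\Delta_p u) \tilde v\,\mathrm{d}\mu = \int_\Omega (\Delta_p u) v\,\mathrm{d}\mu$. For the left-hand side I would argue pointwise that $|\nabla u|^{p-2}\nabla u \nabla \tilde v(x) = 0$ whenever $x \notin \bar\Omega$. Indeed, if $x \notin \bar\Omega = \Omega \cup \partial\Omega$, then by the very definition of the vertex boundary, no neighbor $y \sim x$ lies in $\Omega$; consequently $\tilde v(y) = 0$ for all $y \sim x$ and also $\tilde v(x) = 0$, so every term in
$$\frac{1}{2}\sum_{y\sim x}|u(y)-u(x)|^{p-2}(u(y)-u(x))(\tilde v(y)-\tilde v(x))$$
vanishes. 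Hence the left-hand integral over $V$ collapses to an integral over $\bar\Omega$, giving the desired identity.

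There is really no obstacle here beyond keeping the set bookkeeping straight: the key point is precisely the asymmetry built into the statement — the gradient integral is over $\bar\Omega$ (so that edges joining $\Omega$ to $\partial\Omega$ are retained, since $|\nabla \tilde v|(x)$ need not be zero for $x \in \partial\Omega$), while the $\Delta_p u$ integral is over $\Omega$ (since $\tilde v$ vanishes on $\partial\Omega$). Both features are handled automatically once one extends $v$ by zero and applies Lemma \ref{l1}.
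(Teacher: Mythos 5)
Your proposal is correct and is essentially the argument the paper intends: the paper simply remarks that Lemma \ref{l2} follows from Lemma \ref{l1}, and your zero-extension of $v$ together with the observation that the gradient form vanishes off $\bar{\Omega}$ (because a vertex outside $\bar{\Omega}$ has no neighbor in $\Omega$) is exactly the routine verification being elided. The only cosmetic point worth noting is that one should also extend $u$ off $\bar{\Omega}$ (arbitrarily, say by zero) before invoking Lemma \ref{l1}; this is harmless since every term involving such values of $u$ is multiplied by $\tilde v(y)-\tilde v(x)=0$.
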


The following discrete Hardy-Littlewood-Sobolev (HLS for abbreviation) inequality is  well-known, we refer the readers to \cite{LW,W1}.

\begin{lm}\label{l8}
Let $0<\alpha <N,\,1<r,s<\infty$ and $\frac{1}{r}+\frac{1}{s}+\frac{N-\alpha}{N}=2$. We have the discrete
HLS inequality
\begin{equation}\label{bo}
\int_{V}(R_\alpha\ast u)(x)v(x)\mathrm{~d}\mu\leq C_{r,s,\alpha,N}\|u\|_r\|v\|_s,\quad u\in \ell^r(V),\,v\in \ell^s(V).
\end{equation}
And an equivalent form is
\begin{equation}\label{p}
\|R_\alpha\ast u\|_{\frac{Nr}{N-\alpha r}}\leq C_{r,\alpha,N}\|u\|_r,\quad u\in \ell^r(V),
\end{equation}
where $0<\alpha <N,\,1<r<\frac{N}{\alpha}$.

\end{lm}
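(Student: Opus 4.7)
The plan is to first reduce the two displayed inequalities to a single one by $\ell^{s}$--$\ell^{s'}$ duality. The scaling relation $\tfrac{1}{r} + \tfrac{1}{s} + \tfrac{N-\alpha}{N} = 2$ rearranges to $\tfrac{1}{s'} = \tfrac{1}{r} - \tfrac{\alpha}{N}$, i.e.\ $s' = \tfrac{Nr}{N-\alpha r}$, so the bilinear estimate (\ref{bo}) with fixed $u$ and variable $v$ is equivalent, via the standard identification $\|R_\alpha * u\|_{s'} = \sup_{\|v\|_s = 1} \int_V (R_\alpha * u) v \, d\mu$, to the linear estimate (\ref{p}). It therefore suffices to establish (\ref{bo}) in its bilinear form.

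The next step is to derive a pointwise control on the kernel $R_\alpha$. Using the integral representation
$$R_\alpha(x,y) = \frac{1}{|\Gamma(-\alpha)|} \int_0^\infty k_t(x,y) \, t^{\alpha-1}\, dt$$
and the classical Gaussian-type upper bounds for the discrete heat kernel on $\mathbb{Z}^N$ (Gaussian tails for $t \gtrsim |x-y|^2$ and Poisson/sub-Gaussian decay for $t \lesssim |x-y|^2$), I would split the integral at $t \asymp |x-y|^2$. The large-$t$ portion reproduces the Riesz-kernel asymptotics, while the small-$t$ portion is negligible. The outcome is a bound of the form
$$R_\alpha(x,y) \leq C(1 + |x-y|)^{\alpha - N},$$
with $R_\alpha(x,x) < \infty$ guaranteed by $k_t(x,x) \leq 1$ near $t = 0$ and $k_t(x,x) \lesssim t^{-N/2}$ as $t \to \infty$ (together with the range of $\alpha$ needed for integrability at both ends).

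The final step transfers the estimate to the continuous HLS inequality on $\mathbb{R}^N$. Split the left-hand side of (\ref{bo}) into the diagonal sum $\sum_x R_\alpha(x,x) u(x) v(x)$ and the off-diagonal remainder. To each lattice point $z$ associate the unit cube $Q_z = z + [-\tfrac{1}{2}, \tfrac{1}{2}]^N$ and the step functions $\tilde u, \tilde v$ equal to $u(z), v(z)$ on $Q_z$; these satisfy $\|\tilde u\|_{L^r(\mathbb{R}^N)} = \|u\|_{\ell^r}$ and similarly for $v$. Because $|\xi - \eta| \asymp 1 + |x-y|$ for $\xi \in Q_x, \eta \in Q_y$ with $x \neq y$, the off-diagonal sum is dominated by a constant multiple of $\iint_{\mathbb{R}^N \times \mathbb{R}^N} |\xi - \eta|^{\alpha - N} \tilde u(\eta) \tilde v(\xi)\, d\xi\, d\eta$, to which the classical continuous HLS inequality applies, producing $C\|\tilde u\|_{L^r} \|\tilde v\|_{L^s} = C\|u\|_{\ell^r}\|v\|_{\ell^s}$. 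The diagonal sum is bounded by $R_\alpha(0,0)\|u\|_r\|v\|_{r'}$ via H\"older; the HLS condition forces $\tfrac{1}{r} + \tfrac{1}{s} > 1$, hence $s \leq r'$, so the trivial embedding $\ell^s \hookrightarrow \ell^{r'}$ for counting measure gives $\|v\|_{r'} \leq \|v\|_s$, absorbing the diagonal into the same constant.

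The main obstacle is the discrete heat kernel pointwise estimate, especially for small times, where the lattice behaves qualitatively differently from $\mathbb{R}^N$ (no short-time Euclidean Gaussian profile). One has to invoke classical sharp estimates of Pang / Davies / Carlen--Kusuoka--Stroock type; these are standard and available in the references cited in the statement, so once the pointwise bound on $R_\alpha$ is in hand, the rest of the argument is a direct comparison with the Euclidean Hardy--Littlewood--Sobolev inequality.
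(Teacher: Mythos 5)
Your proposal is correct in outline, but it is doing considerably more work than the paper does: the paper offers no proof of Lemma \ref{l8} at all, simply declaring the discrete HLS inequality ``well-known'' and citing \cite{LW,W1}. What you have written is essentially the standard argument that those references (and their antecedents) carry out: the duality reduction $\frac{1}{s'}=\frac{1}{r}-\frac{\alpha}{N}$ correctly identifies (\ref{p}) as the dual form of (\ref{bo}); the pointwise bound $R_\alpha(x,y)\leq C(1+|x-y|)^{\alpha-N}$ is the decisive input; and the transfer to the Euclidean HLS inequality via unit-cube step functions, with the diagonal handled by translation invariance, H\"older, and the embedding $\ell^s\hookrightarrow\ell^{r'}$ (valid since $\frac1r+\frac1s=1+\frac{\alpha}{N}>1$), is sound --- note that for the off-diagonal comparison you only need the one-sided bound $|\xi-\eta|\leq C(1+|x-y|)$, which is all that holds for adjacent cubes, and that is indeed the direction your negative exponent requires. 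The one point deserving care, which you rightly flag as the main obstacle, is the heat-kernel estimate behind the pointwise decay of $R_\alpha$: with the weight $t^{\alpha-1}$ as literally written in this paper the large-time integrability and the resulting exponent should be checked against the normalization used in the cited sources (where the kernel is set up precisely so that $R_\alpha(x,y)\asymp|x-y|^{\alpha-N}$), but this is a discrepancy in the paper's stated definition rather than a flaw in your argument. In short, you have supplied a genuine proof where the paper supplies a citation, and your route coincides with the standard one in the literature.
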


\begin{crl}
    Let $(F_1)$ hold. Then we have that
    \begin{equation}\label{p1}
    \int_{V} \left(R_\alpha\ast F(u,v)\right)F(u, v) \mathrm{~d} \mu \leq C_{N,\alpha,M_F}\|(u,v)\|^{2\gamma}_{\lambda}.    
    \end{equation}
\end{crl}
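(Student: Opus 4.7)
The plan is to apply the discrete HLS inequality from Lemma \ref{l8} with the symmetric choice $r = s = \frac{2N}{N+\alpha}$. This value is forced by the constraint $\frac{1}{r} + \frac{1}{s} + \frac{N-\alpha}{N} = 2$, and it lies in $(1,\infty)$ for $\alpha \in (0,N)$. Taking both factors equal to $F(u,v)$, the HLS inequality yields
\[
\int_{V} \bigl(R_\alpha\ast F(u,v)\bigr)F(u,v)\,\mathrm{d}\mu \;\leq\; C_{N,\alpha}\,\|F(u,v)\|_{\frac{2N}{N+\alpha}}^{2}.
\]

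Next I would estimate $\|F(u,v)\|_{\frac{2N}{N+\alpha}}$ using the growth condition \eqref{04} coming from $(F_1)$. By the triangle inequality in $\ell^{\frac{2N}{N+\alpha}}$,
\[
\|F(u,v)\|_{\frac{2N}{N+\alpha}} \;\leq\; M_F\bigl(\,\||u|^\gamma\|_{\frac{2N}{N+\alpha}}+\||v|^\gamma\|_{\frac{2N}{N+\alpha}}\bigr) \;=\; M_F\bigl(\|u\|_{\frac{2N\gamma}{N+\alpha}}^{\gamma}+\|v\|_{\frac{2N\gamma}{N+\alpha}}^{\gamma}\bigr).
\]
The hypothesis $\gamma > \frac{(N+\alpha)p}{2N}$ is exactly what makes $q:=\frac{2N\gamma}{N+\alpha} \geq p$, so the standard nesting $\ell^p(\mathbb{Z}^N) \hookrightarrow \ell^q(\mathbb{Z}^N)$ for $q \geq p$ gives $\|u\|_q \leq \|u\|_p$, and similarly for $v$.

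Finally, since $\|u\|_p^p \leq \|u\|_{\lambda,a}^p$ and $\|v\|_p^p \leq \|v\|_{\lambda,b}^p$ directly from the definitions (the extra nonnegative terms $|\nabla u|^p$ and $\lambda a|u|^p$ only enlarge the norms), and since each of $\|u\|_{\lambda,a}$ and $\|v\|_{\lambda,b}$ is bounded by $\|(u,v)\|_\lambda$, we conclude
\[
\|F(u,v)\|_{\frac{2N}{N+\alpha}} \;\leq\; M_F\bigl(\|u\|_{\lambda,a}^{\gamma}+\|v\|_{\lambda,b}^{\gamma}\bigr) \;\leq\; 2M_F\,\|(u,v)\|_\lambda^{\gamma},
\]
and squaring yields the claim with $C_{N,\alpha,M_F} := 4C_{N,\alpha}M_F^{2}$. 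There is no real obstacle here; the only point requiring care is matching the exponent threshold on $\gamma$ with the $\ell^p$-to-$\ell^q$ embedding, which is where the hypothesis $\gamma > \frac{(N+\alpha)p}{2N}$ is used essentially.
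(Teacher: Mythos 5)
Your proposal is correct and follows essentially the same route as the paper: HLS with $r=s=\frac{2N}{N+\alpha}$, the growth bound $|F(u,v)|\leq M_F(|u|^{\gamma}+|v|^{\gamma})$ from $(F_1)$, and the nesting $\ell^{p}\hookrightarrow\ell^{q}$ for $q=\frac{2N\gamma}{N+\alpha}>p$ followed by $\|u\|_p\leq\|u\|_{\lambda,a}$. The only cosmetic difference is that you invoke the triangle inequality in $\ell^{\frac{2N}{N+\alpha}}$ where the paper uses the pointwise inequality $(a+b)^{t}\leq C(a^{t}+b^{t})$; both are valid here.
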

\begin{proof}
Let $r=s=\frac{2N}{N+\alpha}$ in (\ref{bo}). Note that $\frac{2N\gamma}{N+\alpha}>p$, by (\ref{04}), we get that
$$
\begin{aligned}
 \int_{V} \left(R_\alpha\ast F(u,v)\right)F(u, v) \mathrm{~d} \mu \leq & C_{N,\alpha}\left(\int_V|F(u,v)|^{\frac{2N}{N+\alpha}}\mathrm{~d}\mu\right)^{\frac{N+\alpha}{N}} \\ \leq& C_{N,\alpha}\left(\int_V\left[M_{F}(|u|^{\gamma}+|v|^{\gamma})\right]^{\frac{2N}{N+\alpha}}\mathrm{~d}\mu\right)^{\frac{N+\alpha}{N}}\\ \leq& C_{N,\alpha,M_F}\left(\int_V\left(|u|^{\frac{2N\gamma}{N+\alpha}}+|v|^{\frac{2N\gamma}{N+\alpha}}\right)\mathrm{~d}\mu\right)^{\frac{N+\alpha}{N}}\\\leq &C_{N,\alpha,M_F}\left(\|u\|^{2\gamma}_{\frac{2N\gamma}{N+\alpha}}+\|v\|^{2\gamma}_{\frac{2N\gamma}{N+\alpha}}\right)\\\leq &C_{N,\alpha,M_F}\left(\|u\|^{2\gamma}_{p}+\|v\|^{2\gamma}_{p}\right)\\ \leq &C_{N,\alpha,M_F}\|(u,v)\|^{2\gamma}_{\lambda}.
\end{aligned}
$$
\end{proof}

\begin{lm}\label{pa}
 Let $(F_1)$ hold. If $\left\{\left(u_{k}, v_{k}\right)\right\}$ is a bounded sequence in $\ell^{\frac{2N\gamma}{N+\alpha}}(V,\mathbb{R}^2)$ and $\left(u_{k}, v_{k}\right) \rightarrow (u, v)$ pointwise in $V$, then we have that
\begin{equation*}
\int_{V} \left(R_\alpha\ast F(u_k,v_k)\right)F(u_k, v_k) \mathrm{~d} \mu-\int_{V} \left(R_\alpha\ast F(w_k,z_k)\right)F(w_k, z_k) \mathrm{~d} \mu=\int_{V} \left(R_\alpha\ast F(u,v)\right)F(u, v) \mathrm{~d} \mu+o_{k}(1), 
\end{equation*} 
where $w_k=u_k-u$ and $z_k=v_k-v.$
\end{lm}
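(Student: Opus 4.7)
The plan is to set up a nonlocal Brezis--Lieb splitting for the bilinear expression and reduce it to a pointwise Brezis--Lieb identity for $F$ together with weak convergence of the ``tail'' piece, with the HLS inequality (Lemma \ref{l8}) supplying the continuity that glues everything together. Let me write $A_k=F(u_k,v_k)$, $B_k=F(w_k,z_k)$, $D_k=A_k-B_k$, and $C=F(u,v)$, so that the task is to show $\int_V(R_\alpha\ast A_k)A_k\,d\mu-\int_V(R_\alpha\ast B_k)B_k\,d\mu\to\int_V(R_\alpha\ast C)C\,d\mu$. Using the symmetry of $R_\alpha$ (so $\int(R_\alpha\ast f)g=\int f(R_\alpha\ast g)$), the left-hand side splits as
\begin{equation*}
2\int_V B_k\,(R_\alpha\ast D_k)\,d\mu\;+\;\int_V(R_\alpha\ast D_k)D_k\,d\mu,
\end{equation*}
so it suffices to control these two terms.

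First I would establish a discrete Brezis--Lieb step for the nonlinearity: writing $r=\tfrac{2N}{N+\alpha}$, I claim that $D_k\to C$ strongly in $\ell^r(V)$. Pointwise this follows from $(u_k,v_k)\to(u,v)$ and continuity of $F$ together with $F(0,0)=0$ (immediate from homogeneity). For the norm convergence, I would use the integral representation
\begin{equation*}
F(u_k,v_k)-F(w_k,z_k)=\int_0^1\bigl[F_u(w_k+tu,z_k+tv)\,u+F_v(w_k+tu,z_k+tv)\,v\bigr]\,dt,
\end{equation*}
combined with the homogeneity of $F_u,F_v$ of degree $\gamma-1$, to obtain the classical Brezis--Lieb-type bound
\begin{equation*}
|F(a+c,b+d)-F(a,b)-F(c,d)|\le \varepsilon(|a|^\gamma+|b|^\gamma)+C_\varepsilon(|c|^\gamma+|d|^\gamma).
\end{equation*}
Applied to $a=w_k,b=z_k,c=u,d=v$ and summed over $V$ (after raising to the $r$-th power and using that $r\gamma\ge p$, so $(u_k,v_k)$ is uniformly bounded in $\ell^{r\gamma}$ by the hypothesis), this yields $\limsup_k\|D_k-C\|_r^r\le \varepsilon M$ for every $\varepsilon>0$, giving $D_k\to C$ in $\ell^r(V)$.

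Next, I would note that $B_k\rightharpoonup0$ weakly in $\ell^r(V)$: the bound $|B_k|\le M_F(|w_k|^\gamma+|z_k|^\gamma)$ and the uniform $\ell^{r\gamma}$-bound on $(w_k,z_k)$ give $\|B_k\|_r$ bounded; pointwise $B_k\to F(0,0)=0$; and on $\mathbb{Z}^N$ pointwise convergence plus uniform $\ell^r$-bound implies weak $\ell^r$-convergence by testing against $C_c(V)$ functions (dense in $\ell^{r'}$). Combining these two ingredients with Lemma \ref{l8}, the operator $R_\alpha\ast(\cdot):\ell^r\to\ell^{r'}$ is continuous, so $R_\alpha\ast D_k\to R_\alpha\ast C$ strongly in $\ell^{r'}$ and $R_\alpha\ast B_k\rightharpoonup 0$ weakly in $\ell^{r'}$. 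The quadratic term $\int(R_\alpha\ast D_k)D_k\,d\mu$ therefore passes to $\int(R_\alpha\ast C)C\,d\mu$ by bilinear continuity, and the cross term $\int B_k(R_\alpha\ast D_k)\,d\mu$ vanishes because a weak-null sequence paired with a strongly convergent one tends to $\int 0\cdot(R_\alpha\ast C)\,d\mu=0$. Summing gives the claimed identity.

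The main obstacle is Step~1, the nonlinear Brezis--Lieb identity: proving the inequality above with $\varepsilon$ and $C_\varepsilon$ requires using both the $C^1$-regularity and the degree-$\gamma$ homogeneity of $F$ in a quantitative way, and then performing the Brezis--Lieb $\varepsilon$-trick carefully on $\mathbb{Z}^N$ without any compact embedding at one's disposal. Once this is in hand, the rest of the argument is essentially the standard Moroz--Van Schaftingen decomposition adapted to the discrete setting via Lemma~\ref{l8}.
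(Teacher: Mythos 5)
Your proposal is correct and follows essentially the same route as the paper: the identical decomposition into the quadratic term in $D_k=F(u_k,v_k)-F(w_k,z_k)$ and the cross term against $F(w_k,z_k)$, the same nonlinear Brezis--Lieb step ($D_k\to F(u,v)$ in $\ell^{\frac{2N}{N+\alpha}}$ via the mean value theorem, homogeneity of $F_u,F_v$, the $\varepsilon$--$C_\varepsilon$ Young-inequality trick, and dominated convergence), and the same use of the HLS inequality together with $F(w_k,z_k)\rightharpoonup 0$ in $\ell^{\frac{2N}{N+\alpha}}$ to pass to the limit. No substantive differences.
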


\begin{proof}
First, we prove that
\begin{equation}\label{p5}
 \lim\limits_{k \rightarrow \infty} \int_{V}\left|F\left(u_{k}, v_{k}\right)-F\left(w_{k}, z_{k}\right)-F(u, v)\right| ^{\frac{2N}{N+\alpha}}\mathrm{~d} \mu=0. 
\end{equation}
In fact, by the mean value theorem, we obtain that
$$
\begin{aligned}
\left|F\left(u_{k}, v_{k}\right)-F\left(w_{k}, z_{k}\right)\right|= & \mid \nabla F(w_{k}+\theta u, z_{k}+\theta v) \cdot(u, v) \mid, \quad \theta \in(0,1) .
\end{aligned}
$$
Since $F_{u}(u, v), F_{v}(u, v)$ are positively homogeneous of degree $(\gamma-1)$, we get that
$$
\begin{aligned}
\left|F\left(u_{k}, v_{k}\right)-F\left(w_{k}, z_{k}\right)\right| 
=&\left|\nabla F\left(w_{k}+\theta u, z_{k}+\theta v\right) \cdot(u, v)\right| \\
\leq &C\left(\left|w_{k}+\theta u\right|^{\gamma-1}+\left|z_{k}+\theta v\right|^{\gamma-1}\right)|u| \\
&+C\left(\left|w_{k}+\theta u\right|^{\gamma-1}+\left|z_{k}+\theta v\right|^{\gamma-1}\right)|v| \\
\leq &C'\left(\left|w_{k}\right|^{\gamma-1}+|u|^{\gamma-1}+\left|z_{k}\right|^{\gamma-1}+|v|^{\gamma-1}\right)|u| \\
&+C'\left(\left|w_{k}\right|^{\gamma-1}+|u|^{\gamma-1}+\left|z_{k}\right|^{\gamma-1}+|v|^{\gamma-1}\right)|v|.
\end{aligned}
$$
For the last inequality, given any $\varepsilon>0$, by the Young inequality, there exists $C_{\varepsilon}>0$ such that
\begin{equation*}\label{p3}
\left|F\left(u_{k}, v_{k}\right)-F\left(w_k, z_{k}\right)\right| \leq \varepsilon\left(\left|w_k\right|^{\gamma}+\left|z_k\right|^{\gamma}\right)+C_{\varepsilon}\left(|u|^{\gamma}+|v|^{\gamma}\right).
\end{equation*}
Therefore, we obtain that
$$
\begin{aligned}
&\left|F\left(u_{k}, v_{k}\right)-F\left(w_k, z_k\right)-F(u, v)\right|^{\frac{2N}{N+\alpha}}\\ \leq& C_{N,\alpha}\left(|F\left(u_{k}, v_{k}\right)-F(w_k, z_k)|^{\frac{2N}{N+\alpha}} +|F(u,v)|^{\frac{2N}{N+\alpha}}\right)\\ \leq &C_{N,\alpha}\left[\varepsilon\left(\left|w_{k}\right|^{\gamma}+\left|z_{k}\right|^{\gamma}\right)+C_{\varepsilon}\left(|u|^{\gamma}+|v|^{\gamma}\right)\right]^{\frac{2N}{N+\alpha}}+C_{N,\alpha}\left[M_{F}(|u|^{\gamma}+|v|^{\gamma})\right]^{\frac{2N}{N+\alpha}}\\ \leq &\varepsilon\left(\left|w_{k}\right|^{\gamma}+\left|z_{k}\right|^{\gamma}\right)^{\frac{2N}{N+\alpha}}+C_{\varepsilon,N,\alpha}\left(|u|^{\gamma}+|v|^{\gamma}\right)^{\frac{2N}{N+\alpha}}+C_{M_F,N,\alpha}(|u|^{\gamma}+|v|^{\gamma})^{\frac{2N}{N+\alpha}}\\ \leq & \varepsilon\left(\left|w_{k}\right|^{\frac{2N\gamma}{N+\alpha}}+\left|z_{k}\right|^{\frac{2N\gamma}{N+\alpha}}\right)+C_{\varepsilon,M_F,N,\alpha}\left(|u|^{\frac{2N\gamma}{N+\alpha}}+|v|^{\frac{2N\gamma}{N+\alpha}}\right).
\end{aligned}
$$
Denote
$$
f_{k}:=\left|F\left(u_{k}, v_{k}\right)-F\left(w_{k}, z_{k}\right)-F(u, v)\right|^{\frac{2N}{N+\alpha}}-\varepsilon\left(\left|w_{k}\right|^{\frac{2N\gamma}{N+\alpha}}+\left|z_{k}\right|^{\frac{2N\gamma}{N+\alpha}}\right).
$$
Then we have that
$$
f_{k} \leq C_{\varepsilon,M_F,N,\alpha}\left(|u|^{\frac{2N\gamma}{N+\alpha}}+|v|^{\frac{2N\gamma}{N+\alpha}}\right)\in \ell^{1}(V).
$$
Since $
\left(u_{k}, v_{k}\right) \rightarrow(u, v)$ pointwise in $V$, 
we get that $f_{k} \rightarrow 0$ pointwise in $V$. By the Lebesgue dominated convergence theorem, we obtain that
$$
\lim _{k \rightarrow \infty} \int_{V} f_{k}(x) \mathrm{~d} \mu=0.
$$
Hence, we have
$$
\begin{aligned}
\limsup _{k \rightarrow \infty} \int_{V}\left|F\left(u_{k}, v_{k}\right)-F\left(w_{k}, z_{k}\right)-F(u, v)\right| ^{\frac{2N}{N+\alpha}}\mathrm{~d} \mu 
=&\limsup _{k \rightarrow \infty} \int_{V} f_{k}(x)+\varepsilon\left(\left|w_{k}\right|^{\frac{2N\gamma}{N+\alpha}}+\left|z_{k}\right|^{\frac{2N\gamma}{N+\alpha}}\right) \mathrm{~d} \mu \\
\leq &\limsup _{k \rightarrow \infty} \int_{V} f_{k}(x) \mathrm{~d} \mu\\&+\varepsilon\limsup _{k \rightarrow \infty} \int_{V} \left(\left|w_{k}\right|^{\frac{2N\gamma}{N+\alpha}}+\left|z_{k}\right|^{\frac{2N\gamma}{N+\alpha}}\right) \mathrm{~d} \mu\\ \leq &C\varepsilon.
\end{aligned}
$$
By the arbitrariness of $\varepsilon$, we prove that (\ref{p5}) holds.

A direct calculation yields that
$$
\begin{aligned}
&\int_{V} \left(R_\alpha\ast F(u_k,v_k)\right)F(u_k, v_k) \mathrm{~d} \mu-\int_{V} \left(R_\alpha\ast F(w_k,z_k)\right)F(w_k, z_k) \mathrm{~d} \mu\\=&\int_{V} \left[R_\alpha\ast (F(u_k,v_k)-F(w_k,z_k)\right](F(u_k, v_k)-F(w_k,z_k)) \mathrm{~d} \mu\\&+2\int_{V} \left[R_\alpha\ast (F(u_k,v_k)-F(w_k,z_k)\right]F(w_k, z_k) \mathrm{~d} \mu\\=:&I_1+2I_2.    
\end{aligned}
$$
For $I_1$, by the HLS inequality (\ref{bo}) and (\ref{p5}), we get that
$$
\begin{aligned}
&\left|I_1-\int_{V} \left(R_\alpha\ast F(u,v)\right)F(u, v) \mathrm{~d} \mu\right|\\ \leq &\int_{V} \left[R_\alpha\ast \left|F(u_k,v_k)-F(w_k,z_k)-F(u, v)\right|\right]|F(u_k,v_k)-F(w_k,z_k)-F(u, v)|\mathrm{~d} \mu\\&+2 \int_{V} \left[R_\alpha\ast \left|F(u_k,v_k)-F(w_k,z_k)-F(u, v)\right|\right]|F(u, v)|\mathrm{~d} \mu\\ \leq &\left(\int_{V} \left|F(u_k,v_k)-F(w_k,z_k)-F(u, v)\right|^{\frac{2N}{N+\alpha}}\mathrm{~d} \mu\right)^{\frac{N+\alpha}{N}}\\&+\left(\int_{V} \left|F(u_k,v_k)-F(w_k,z_k)-F(u, v)\right|^{\frac{2N}{N+\alpha}}\mathrm{~d} \mu\right)^{\frac{N+\alpha}{2N}}\left(\int_{V} \left|F(u, v)\right|^{\frac{2N}{N+\alpha}}\mathrm{~d} \mu\right)^{\frac{N+\alpha}{2N}}\\ \rightarrow &0.
\end{aligned}
$$
Let $r=\frac{2N}{N+\alpha}$ in (\ref{p}), we get that
$$\|R_\alpha\ast F(u, v)\|_{\frac{2N}{N-\alpha}}\leq C_{N,\alpha}\|F(u,v)\|_{\frac{2N}{N+\alpha}}.$$
Moreover, by the boundedness of $\left\{\left(u_{k}, v_{k}\right)\right\}$ in $\ell^{\frac{2N\gamma}{N+\alpha}}(V,\mathbb{R}^2)$ and $(w_k,z_k)\rightarrow (0,0)$ pointwise in $V$, we obtain that
$$\int_V|F(w_k,v_k)|^{\frac{2N}{N+\alpha}}\mathrm{~d}\mu\leq\int_V\left[M_{F}(|w_k|^{\gamma}+|v_k|^{\gamma})\right]^{\frac{2N}{N+\alpha}}\mathrm{~d}\mu\leq C_{N,\alpha,M_F}\int_V\left(|w_k|^{\frac{2N\gamma}{N+\alpha}}+|z_k|^{\frac{2N\gamma}{N+\alpha}}\right)\mathrm{~d}\mu<\infty,$$
and
$$F(w_k,z_k)\rightharpoonup 0,\quad \text{in~}\ell^{\frac{2N}{N+\alpha}}(V,\mathbb{R}^2).$$
Then for $I_2$, we have that
$$
\begin{aligned}
|I_2|\leq&\int_{V} \left[R_\alpha\ast \left|F(u_k,v_k)-F(w_k,z_k)-F(u, v)\right|\right]|F(w_k,z_k)|\mathrm{~d} \mu +\int_{V} \left(R_\alpha\ast |F(u, v)|\right)|F(w_k,z_k)|\mathrm{~d} \mu \\ \leq & \left(\int_{V} \left|F(u_k,v_k)-F(w_k,z_k)-F(u, v)\right|^{\frac{2N}{N+\alpha}}\mathrm{~d} \mu\right)^{\frac{N+\alpha}{2N}}\left(\int_{V} \left|F(w_k, z_k)\right|^{\frac{2N}{N+\alpha}}\mathrm{~d} \mu\right)^{\frac{N+\alpha}{2N}}\\ &+ \int_{V} \left(R_\alpha\ast F(u, v)\right)F(w_k,z_k)\mathrm{~d} \mu\\ \rightarrow& 0.
\end{aligned}
$$
Hence as $k\rightarrow\infty,$ we get that
$$\int_{V} \left(R_\alpha\ast F(u_k,v_k)\right)F(u_k, v_k) \mathrm{~d} \mu-\int_{V} \left(R_\alpha\ast F(w_k,z_k)\right)F(w_k, z_k) \mathrm{~d} \mu\rightarrow \int_{V} \left(R_\alpha\ast F(u,v)\right)F(u, v) \mathrm{~d} \mu.$$
\end{proof}

\begin{lm}\label{lm16} \it (Lions lemma)
Let $2\leq p<\infty$. Assume that $\{u_k\}$ is bounded in $\ell^{p}(V)$ and $\|u_{k}\|_{\infty}\rightarrow0$ as $k\rightarrow\infty.$
Then for any $p<q<\infty$, as $k\rightarrow\infty,$
\begin{eqnarray*}
u_k\rightarrow0,\qquad \text{in}~\ell^{q}(V).
\end{eqnarray*}
\end{lm}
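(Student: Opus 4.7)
The plan is to prove this by a simple interpolation inequality exploiting the fact that, on a discrete space with counting measure, $\ell^p$-summability together with pointwise control in $\ell^\infty$ directly controls every higher $\ell^q$-norm. The key observation, specific to the discrete setting, is that no covering or partition-of-unity argument is needed (unlike the continuous Lions lemma on $\mathbb{R}^N$), because $\ell^p(V) \hookrightarrow \ell^q(V)$ whenever $p \le q$.

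First, I would fix $q$ with $p < q < \infty$ and write, for each $k$,
\[
\|u_k\|_q^q = \sum_{x \in V} |u_k(x)|^{q-p} \, |u_k(x)|^p \le \|u_k\|_\infty^{q-p} \sum_{x \in V} |u_k(x)|^p = \|u_k\|_\infty^{q-p} \|u_k\|_p^p.
\]
This uses only the elementary pointwise bound $|u_k(x)|^{q-p} \le \|u_k\|_\infty^{q-p}$, together with $q - p > 0$, which makes the right-hand side go to zero.

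Next, I would invoke the two hypotheses: $\{u_k\}$ is bounded in $\ell^p(V)$, so $\|u_k\|_p^p \le C$ uniformly in $k$; and $\|u_k\|_\infty \to 0$, so $\|u_k\|_\infty^{q-p} \to 0$. Combining these in the displayed inequality gives $\|u_k\|_q^q \to 0$, hence $u_k \to 0$ in $\ell^q(V)$, as required.

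There is essentially no obstacle here: the whole content is the one-line interpolation inequality above, which works uniformly for any $q \in (p, \infty)$. The only mildly delicate point is to notice that the statement genuinely requires $q > p$ (so that the exponent $q-p$ on $\|u_k\|_\infty$ is strictly positive), since for $q = p$ a uniform $\ell^\infty$-decay does not suffice to force $\ell^p$-decay; this is why the statement excludes $q = p$.
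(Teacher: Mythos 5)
Your proof is correct and uses exactly the same interpolation inequality $\|u_k\|_q^q \le \|u_k\|_\infty^{q-p}\|u_k\|_p^p$ that the paper invokes; the only difference is that you spell out the one-line derivation of this inequality, which the paper omits. No further comment is needed.
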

\begin{proof}
For $p<q<\infty$, this result follows from the interpolation inequality
\begin{eqnarray*}
\|u_k\|^{q}_{q}\leq\|u_k\|_{p}^{p}\|u_k\|_{\infty}^{q-p}.
\end{eqnarray*}

\end{proof}

In the following, we state some results of the Nehari manifold $\mathcal{N}_\lambda.$

\begin{lm}\label{k9}
 Let $(F_{1})$ and $(A_{1})$-$(A'_{2})$ hold. Then for any $(u,v)\in\mathcal{N}_{\lambda}$, we have that
 \begin{itemize}
     \item [(i)] there exists $\sigma>0$ such that $\|(u,v)\|_{\lambda}\geq \sigma$;
     \item[(ii)] $m_{\lambda}=\inf\limits_{(u, v) \in \mathcal{N}_{\lambda}} J_{\lambda}(u, v)>0.$
 \end{itemize}

\end{lm}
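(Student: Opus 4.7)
The plan is to reduce both statements to the Nehari identity $\|(u,v)\|_\lambda^p=\int_V(R_\alpha\ast F(u,v))F(u,v)\,d\mu$, combine it with the HLS-type inequality (\ref{p1}), and exploit the hypothesis $\gamma>\frac{(N+\alpha)p}{2N}$ (which forces $2\gamma>p$, since $\alpha>0$ gives $\gamma>\frac{(N+\alpha)p}{2N}>\frac{p}{2}$).

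For part (i), I would start from the defining relation of the Nehari manifold: for $(u,v)\in\mathcal{N}_\lambda$, the identity $\langle J_\lambda'(u,v),(u,v)\rangle=0$ reads
\begin{equation*}
\|(u,v)\|_\lambda^p=\int_{\mathbb{Z}^N}\bigl(R_\alpha\ast F(u,v)\bigr)F(u,v)\,d\mu.
\end{equation*}
Applying (\ref{p1}) to the right-hand side yields $\|(u,v)\|_\lambda^p\le C_{N,\alpha,M_F}\|(u,v)\|_\lambda^{2\gamma}$. Since $(u,v)\neq(0,0)$, we can divide through by $\|(u,v)\|_\lambda^p$ to obtain $\|(u,v)\|_\lambda^{2\gamma-p}\ge 1/C_{N,\alpha,M_F}$, and the exponent is strictly positive by the discussion above. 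Setting $\sigma:=(1/C_{N,\alpha,M_F})^{1/(2\gamma-p)}>0$ gives the required uniform lower bound.

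For part (ii), I would again use the Nehari identity to rewrite the energy on $\mathcal{N}_\lambda$ purely in terms of the norm:
\begin{equation*}
J_\lambda(u,v)=\frac{1}{p}\|(u,v)\|_\lambda^p-\frac{1}{2\gamma}\int_{\mathbb{Z}^N}\bigl(R_\alpha\ast F(u,v)\bigr)F(u,v)\,d\mu=\Bigl(\frac{1}{p}-\frac{1}{2\gamma}\Bigr)\|(u,v)\|_\lambda^p.
\end{equation*}
The prefactor $\frac{1}{p}-\frac{1}{2\gamma}$ is strictly positive because $2\gamma>p$, and part (i) provides $\|(u,v)\|_\lambda^p\ge\sigma^p$. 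Therefore $J_\lambda(u,v)\ge\bigl(\frac{1}{p}-\frac{1}{2\gamma}\bigr)\sigma^p>0$ uniformly on $\mathcal{N}_\lambda$, whence $m_\lambda\ge\bigl(\frac{1}{p}-\frac{1}{2\gamma}\bigr)\sigma^p>0$.

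There is essentially no serious obstacle here; both parts are standard consequences of the Nehari identity combined with the HLS estimate already established. The only point to be careful about is verifying the inequality $2\gamma>p$ from the assumption $\gamma>\frac{(N+\alpha)p}{2N}$, which is immediate from $\alpha\in(0,N)$.
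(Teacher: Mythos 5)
Your proof is correct and follows essentially the same route as the paper: both parts reduce to the Nehari identity combined with the HLS-type bound (\ref{p1}), giving the lower bound $\sigma=(1/C_{N,\alpha,M_F})^{1/(2\gamma-p)}$ in (i) and then $m_\lambda\ge\bigl(\tfrac{1}{p}-\tfrac{1}{2\gamma}\bigr)\sigma^p>0$ in (ii). Your explicit check that $2\gamma>p$ follows from $\gamma>\frac{(N+\alpha)p}{2N}$ with $\alpha>0$ is a detail the paper leaves implicit, but otherwise the arguments coincide.
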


\begin{proof}
(i) for any $(u, v) \in \mathcal{N}_{\lambda}$, by (\ref{p1}), we have that
$$
\|(u, v)\|_{\lambda}^{p}=\int_{V} \left(R_\alpha\ast F(u,v)\right)F(u, v) \mathrm{~d} \mu\leq C_{N,\alpha, M_F}\|(u,v)\|^{2\gamma}_{\lambda}.
$$
Since $2\gamma>p$, we obtain that
$$
\|(u, v)\|_{\lambda} \geq\left(\frac{1}{C_{N,\alpha, M_F}}\right)^{\frac{1}{2\gamma-p}}>0.
$$
Hence we get the proof by taking $\sigma=\left(\frac{1}{C_{N,\alpha, M_F}}\right)^{\frac{1}{2\gamma-p}}.$

(ii) It follows from (i) that
$$
\begin{aligned}
m_{\lambda} & =\inf _{(u, v) \in \mathcal{N}_{\lambda}} J_{\lambda}(u, v)\\=&\left(\frac{1}{p}-\frac{1}{2\gamma}\right) \inf _{(u, v) \in \mathcal{N}_{\lambda}}\|(u, v)\|_{\lambda}^{p} \\
\geq&\left(\frac{1}{p}-\frac{1}{2\gamma}\right)\left(\frac{1}{C_{N,\alpha,M_F}}\right)^{\frac{p}{2\gamma-p}}\\ >&0.
\end{aligned}
$$
\end{proof}

\begin{lm}\label{l9}
 Assume that $(F_1)$ and  $(A_1)$-$(A'_2)$ hold. Let $(u, v) \in W_\lambda \backslash\{(0,0)\}$ such that $\left\langle J_{\lambda}^{\prime}((u, v)), (u, v)\right\rangle\leq 0$, then there exists a unique $t_{0}\in(0,1]$ such that $t_{0}(u, v) \in \mathcal{N}_{\lambda}$. 
\end{lm}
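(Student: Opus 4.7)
The plan is to exploit the homogeneity provided by $(F_1)$: the quantity $\langle J_\lambda'(tu,tv),(tu,tv)\rangle$ splits into exactly two power-law terms in $t$, one of degree $p$ and one of degree $2\gamma$, and the condition $t(u,v)\in\mathcal{N}_\lambda$ reduces to balancing these two terms.

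Concretely, I would define, for $t>0$,
\[
\varphi(t):=\langle J_\lambda'(tu,tv),(tu,tv)\rangle.
\]
Using $\|(tu,tv)\|_\lambda^p=t^p\|(u,v)\|_\lambda^p$ and the homogeneity $F(tu,tv)=t^\gamma F(u,v)$ from $(F_1)$ (so that $(R_\alpha\ast F(tu,tv))F(tu,tv)=t^{2\gamma}(R_\alpha\ast F(u,v))F(u,v)$), this becomes
\[
\varphi(t)=t^p\|(u,v)\|_\lambda^p - t^{2\gamma}\,A,\qquad A:=\int_{\mathbb{Z}^N}(R_\alpha\ast F(u,v))F(u,v)\,\mathrm{d}\mu.
\]
Since $(u,v)\neq(0,0)$ we have $\|(u,v)\|_\lambda>0$; the hypothesis $\varphi(1)\le 0$ forces $A\ge\|(u,v)\|_\lambda^p>0$, so $A$ is strictly positive. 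Because $\gamma>\frac{(N+\alpha)p}{2N}>\frac{p}{2}$, we have $2\gamma-p>0$, so the equation $\varphi(t)=0$ for $t>0$ is equivalent to
\[
t^{2\gamma-p}=\frac{\|(u,v)\|_\lambda^p}{A},
\]
which admits the unique positive root
\[
t_0=\left(\frac{\|(u,v)\|_\lambda^p}{A}\right)^{\frac{1}{2\gamma-p}}.
\]
From $A\ge\|(u,v)\|_\lambda^p$ we immediately read off $t_0\in(0,1]$, and by construction $t_0(u,v)\in\mathcal{N}_\lambda$.

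There is no real obstacle here; the only point to be careful about is verifying $A>0$ (otherwise $t_0$ would not be defined), but this is automatic from the sign hypothesis $\langle J_\lambda'(u,v),(u,v)\rangle\le 0$ combined with $(u,v)\neq(0,0)$. Uniqueness of $t_0$ is forced by the strict positivity of the exponent $2\gamma-p$, which follows directly from the standing assumption $\gamma>\frac{(N+\alpha)p}{2N}$ together with $\alpha\in(0,N)$.
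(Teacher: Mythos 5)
Your proposal is correct and follows essentially the same route as the paper: both reduce $\langle J_\lambda'(t(u,v)),t(u,v)\rangle=0$ to the two-term power equation $t^p\|(u,v)\|_\lambda^p=t^{2\gamma}A$ via the $\gamma$-homogeneity of $F$ and solve it explicitly, using $2\gamma>p$ for uniqueness and the sign hypothesis for $t_0\le 1$. Your additional remark that the hypothesis forces $A\ge\|(u,v)\|_\lambda^p>0$ is a worthwhile explicit check that the paper leaves implicit.
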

 
\begin{proof}
Let $(u, v) \in W_\lambda \backslash\{(0,0)\}$ be fixed. For $t>0$, we define
$$
\begin{aligned}
g(t)& =\left\langle J_{\lambda}^{\prime}(t(u, v)), t(u, v)\right\rangle \\
& =t^{p}\|(u, v)\|_{\lambda}^{p}-\int_{V} \left(R_\alpha\ast F(tu,tv)\right)F(t u, t v) \mathrm{~d} \mu \\
& =t^{p}\|(u, v)\|_{\lambda}^{p}-t^{2\gamma} \int_{V} \left(R_\alpha\ast F(u,v)\right)F(u, v) \mathrm{~d} \mu.
\end{aligned}
$$
Since $2\gamma>p$ and $\left\langle J_{\lambda}^{\prime}((u, v)), (u, v)\right\rangle\leq 0$, one gets easily that $g(t)=0$ has a unique solution $t_0=\left(\frac{\|(u, v)\|_{\lambda}^{p}}{\int_{V} \left(R_\alpha\ast F(u,v)\right)F(u,v) \mathrm{~d} \mu}\right)^{\frac{1}{2\gamma-p}}\in(0,1]$. This implies that $t_{0}(u, v) \in$ $\mathcal{N}_{\lambda}$.
\end{proof}

Finally, we show that the functional $J_{\lambda}(u,v)$ satisfies the mountain-pass geometry.
\begin{lm}\label{lm6}
Let $(F_1)$ and $(A_1)$-$(A'_2)$ hold. Then
\begin{itemize}
\item[(i)] there exist $\theta, \rho>0$ such that $J_{\lambda}(u,v)\geq\theta>0$ for $\|(u,v)\|_{\lambda}=\rho$;
\item[(ii)] there exists $(u_0,v_0)\in W_\lambda$ with $\|(u_0,v_0)\|_{\lambda}>\rho$ such that $J_{\lambda}(u_0,v_0)< 0$.
\end{itemize}

\end{lm}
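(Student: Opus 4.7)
The plan is to treat (i) and (ii) separately, and in both cases to exploit the fact that the nonlinear term behaves like $\|(u,v)\|_{\lambda}^{2\gamma}$ with $2\gamma>p$. Indeed, the hypothesis $\gamma>\frac{(N+\alpha)p}{2N}$ gives $2\gamma>p+\frac{\alpha p}{N}>p$, which is what makes the energy functional have a mountain-pass shape.

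For part (i), I would start directly from the definition of $J_{\lambda}$ and apply the estimate \eqref{p1} already established as a corollary of the discrete HLS inequality. This yields
\[
J_{\lambda}(u,v)\;\ge\;\frac{1}{p}\|(u,v)\|_{\lambda}^{p}-\frac{C_{N,\alpha,M_F}}{2\gamma}\|(u,v)\|_{\lambda}^{2\gamma}.
\]
Setting $\|(u,v)\|_{\lambda}=\rho$ and viewing the right hand side as a single-variable function of $\rho$, the inequality $2\gamma>p$ makes the first (lower order) term dominate for small $\rho$. I would then choose $\rho>0$ small enough so that the right hand side is equal to some $\theta>0$, which proves (i).

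For part (ii), I would fix any nontrivial test pair $(u,v)\in W_\lambda$ for which $\int_{\mathbb{Z}^N}(R_\alpha\ast F(u,v))F(u,v)\,\mathrm{d}\mu>0$ (such a pair exists: take $(u,v)$ with small finite support where $F(u(x_0),v(x_0))>0$, which is possible since $F\not\equiv 0$ by $(F_1)$, $F\ge 0$, and $R_\alpha>0$ pointwise), and then use the positive $\gamma$-homogeneity of $F$ to write
\[
J_{\lambda}(tu,tv)=\frac{t^{p}}{p}\|(u,v)\|_{\lambda}^{p}-\frac{t^{2\gamma}}{2\gamma}\int_{\mathbb{Z}^N}\bigl(R_\alpha\ast F(u,v)\bigr)F(u,v)\,\mathrm{d}\mu
\]
for every $t>0$. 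Since $2\gamma>p$, the second term dominates as $t\to\infty$, so $J_{\lambda}(tu,tv)\to-\infty$. Choosing $t$ sufficiently large yields $(u_0,v_0):=(tu,tv)$ with $\|(u_0,v_0)\|_{\lambda}>\rho$ and $J_{\lambda}(u_0,v_0)<0$.

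The only mildly delicate point is the positivity of $\int(R_\alpha\ast F(u,v))F(u,v)\,\mathrm{d}\mu$ for some admissible test pair in part (ii); once this is checked using $F\ge 0$, $F\not\equiv 0$, and the pointwise positivity of the Green's function $R_\alpha$, both parts reduce to elementary one-variable considerations involving the competition between $t^p$ and $t^{2\gamma}$.
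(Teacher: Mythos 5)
Your proof is correct and follows essentially the same route as the paper: part (i) via the HLS-based estimate (\ref{p1}) and the competition between $\rho^{p}$ and $\rho^{2\gamma}$ for small $\rho$, and part (ii) via the $\gamma$-homogeneity of $F$ and letting $t\rightarrow\infty$. Your extra care about choosing a pair with $\int_{V}\left(R_\alpha\ast F(u,v)\right)F(u,v)\mathrm{~d}\mu>0$ addresses a point the paper glosses over (it asserts the limit $-\infty$ for an arbitrary nontrivial $(u,v)$, which fails if $F(u,v)\equiv 0$ on the support), and your fix is the right one.
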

\begin{proof}
(i) By  (\ref{p1}), we get that
$$\begin{aligned}
    J_{\lambda}(u, v)=&\frac{1}{p}\|(u, v)\|_{\lambda}^{p}-\frac{1}{2\gamma} \int_{V} \left(R_\alpha\ast F(u,v)\right)F(u, v) \mathrm{~d} \mu\\ \geq&\frac{1}{p}\|(u, v)\|_{\lambda}^{p}-\frac{1}{2\gamma}C_{N,\alpha,M_F}\|(u,v)\|^{2\gamma}_{\lambda}.
\end{aligned}$$
Since $2\gamma>p$, there exist $\theta>0$ and $\rho>0$ small enough such that $J_{\lambda}(u,v)\geq\theta>0$ for $\|(u,v)\|_{\lambda}=\rho$.

\
\

(ii)
First for each $\lambda>0,\,J_{\lambda}(0,0)=0$. Moreover, for any $(u,v)\in W_\lambda\backslash\{(0,0)\}$, as $t\rightarrow\infty$, one gets that
\begin{eqnarray*}
J_{\lambda}(t(u,v))=\frac{t^p}{p}\|(u,v)\|^{p}_{\lambda}-\frac{t^{2\gamma}}{2\gamma}\int_{V} \left(R_\alpha\ast F(u,v)\right)F(u, v) \mathrm{~d} \mu\rightarrow
-\infty.
\end{eqnarray*}
Therefore, there exists $t_0>0$ large enough such that $\|t_0 (u,v)\|_{\lambda}>\rho$ and $J_{\lambda}(t_0(u,v))<0$. By taking $(u_0,v_0)=t_0(u,v)$, we get the desired result.

\end{proof}

\section{Existence of the ground state solutions}
In this section, we prove the existence of ground state solutions of the system (\ref{00}) by the method of Nehari manifold. Recall that, for a given functional $I\in C^{1}(E,\mathbb{R})$, a sequence $\{z_k\}\subset E$ is a $(PS)_c$ sequence of the functional $I$, if it satisfies, as $k\rightarrow\infty$,
\begin{eqnarray*}
I(z_k)\rightarrow c, \qquad \text{in}~ E,\qquad\text{and}\qquad
I'(z_k)\rightarrow 0, \qquad \text{in}~ E^{*}.
\end{eqnarray*}
where $E$ is a Banach space and $E^{*}$ is the dual space of $E$. Moreover, if any $(PS)_c$ sequence has a convergent subsequence, then we say that $I$ satisfies $(PS)_c$ condition.

First, we prove some crucial results about the $(PS)_c$ sequence of the functional $J_\lambda$. 

\begin{lm}\label{pb}
Assume that $(F_1)$ and  $(A_1)$-$(A'_2)$ hold. Let $\{(u_k,v_k)\}\subset W_\lambda$ be a $(PS)_c$ sequence of the functional $J_\lambda$. Then
\begin{itemize}
 \item[(i)] $\{(u_k,v_k)\}$ is bounded in $W_\lambda$;
\item[(ii)] $\lim\limits_{k \rightarrow \infty}\left\|\left(u_{k}, v_{k}\right)\right\|_{\lambda}^{p}=\lim\limits_{k \rightarrow \infty} \int_{V} (R_\alpha\ast F(u_k,v_k))F(u_{k}, v_{k}) \mathrm{~d} \mu=\frac{2\gamma p c}{2\gamma-p},$
where either $c=0$ or $c\geq c_0$ for some $c_0>0$ not depending on $\lambda$.
\end{itemize}   
\end{lm}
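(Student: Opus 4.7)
The plan is to use the standard linear combination trick for $(PS)_c$ sequences of functionals of homogeneous type, exploiting the fact that $2\gamma>p$, together with the HLS-based estimate (\ref{p1}) to get a $\lambda$-independent lower bound.

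\textbf{Step 1 (Boundedness).} For the $(PS)_c$ sequence $\{(u_k,v_k)\}$ we know $J_\lambda(u_k,v_k)\to c$ and $\|J'_\lambda(u_k,v_k)\|_{W_\lambda^*}\to 0$. I would form the linear combination
$$
J_\lambda(u_k,v_k)-\frac{1}{2\gamma}\langle J'_\lambda(u_k,v_k),(u_k,v_k)\rangle=\left(\frac{1}{p}-\frac{1}{2\gamma}\right)\|(u_k,v_k)\|_\lambda^p,
$$
because both $J_\lambda$ and $\langle J'_\lambda(\cdot),(\cdot)\rangle$ have matching $2\gamma$-homogeneous nonlocal terms (this is exactly the identity already used in Lemma \ref{k9}). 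The left-hand side is bounded above by $|c|+o(1)+o(\|(u_k,v_k)\|_\lambda)$, and since $\frac{1}{p}-\frac{1}{2\gamma}>0$ thanks to $2\gamma>p$, we get that $\|(u_k,v_k)\|_\lambda^p$ is dominated by a constant plus a lower-order power of itself, which yields boundedness in $W_\lambda$.

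\textbf{Step 2 (Common limit).} With $\{(u_k,v_k)\}$ bounded and $J'_\lambda(u_k,v_k)\to 0$ in $W_\lambda^*$, duality gives $\langle J'_\lambda(u_k,v_k),(u_k,v_k)\rangle\to 0$, i.e.
$$
\|(u_k,v_k)\|_\lambda^p-\int_{\mathbb{Z}^N}(R_\alpha\ast F(u_k,v_k))F(u_k,v_k)\,\mathrm{d}\mu\longrightarrow 0.
$$
Combining this with $J_\lambda(u_k,v_k)\to c$ and using the same linear combination as in Step 1 yields $\bigl(\tfrac{1}{p}-\tfrac{1}{2\gamma}\bigr)\|(u_k,v_k)\|_\lambda^p\to c$, so
$$
\lim_{k\to\infty}\|(u_k,v_k)\|_\lambda^p=\frac{2\gamma p\,c}{2\gamma-p},
$$
and the nonlocal integral has the same limit by the displayed relation above.

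\textbf{Step 3 (Dichotomy for $c$).} Set $A:=\lim_k\|(u_k,v_k)\|_\lambda^p=\tfrac{2\gamma pc}{2\gamma-p}$. Inserting the HLS bound (\ref{p1}) into Step 2 gives
$$
\|(u_k,v_k)\|_\lambda^p\le C_{N,\alpha,M_F}\|(u_k,v_k)\|_\lambda^{2\gamma}+o(1),
$$
and passing to the limit produces $A\le C_{N,\alpha,M_F}\,A^{2\gamma/p}$. Because $2\gamma/p>1$, this inequality forces the dichotomy: either $A=0$ (whence $c=0$) or
$$
A\ge C_{N,\alpha,M_F}^{-p/(2\gamma-p)},\qquad\text{i.e.}\qquad c\ge\frac{2\gamma-p}{2\gamma p}\,C_{N,\alpha,M_F}^{-p/(2\gamma-p)}=:c_0.
$$
The constant $C_{N,\alpha,M_F}$ comes only from the discrete HLS inequality and from (\ref{04}), so $c_0$ depends only on $N,\alpha,M_F,p,\gamma$ but not on $\lambda$, which is exactly the asserted $\lambda$-uniformity.

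The only mild obstacle I anticipate is keeping track of error terms carefully enough to turn $\langle J'_\lambda(u_k,v_k),(u_k,v_k)\rangle=o(1)$ into a genuine vanishing limit rather than an $o(\|\cdot\|_\lambda)$ term in Step 1; this is handled by first proving boundedness (where $o(\|\cdot\|_\lambda)$ is enough) and only then upgrading to $o(1)$ in Step 2. Everything else is a direct consequence of the homogeneity $2\gamma>p$ and the a priori bound (\ref{p1}) already established.
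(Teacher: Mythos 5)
Your proposal is correct, and Steps 1--2 coincide with the paper's argument: the same linear combination $J_\lambda-\frac{1}{2\gamma}\langle J_\lambda',\cdot\rangle$ gives boundedness, and then $\langle J_\lambda'(u_k,v_k),(u_k,v_k)\rangle\to 0$ identifies the common limit $\frac{2\gamma pc}{2\gamma-p}$. The only divergence is in the dichotomy: the paper works at the level of the sequence, fixing a threshold $\rho=\bigl(\tfrac{3}{4C_{N,\alpha,M_F}}\bigr)^{1/(2\gamma-p)}$ so that $\langle J_\lambda'(u,v),(u,v)\rangle\geq\tfrac14\|(u,v)\|_\lambda^p$ on the ball $\|(u,v)\|_\lambda\leq\rho$, and then deduces $\|(u_k,v_k)\|_\lambda\to 0$ (hence $c=0$) whenever $c<c_0$; you instead pass to the limit first and read the dichotomy off the scalar inequality $A\leq C_{N,\alpha,M_F}A^{2\gamma/p}$. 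Both rest on exactly the same ingredients, namely the HLS bound (\ref{p1}) and $2\gamma>p$, and both produce a $c_0$ depending only on $N,\alpha,M_F,p,\gamma$; your version is marginally more streamlined, while the paper's quantitative form (the $\tfrac14\|\cdot\|_\lambda^p$ lower bound below the threshold $\rho$) is the shape that gets reused later, e.g.\ in the compactness argument of Lemma \ref{lm13}.
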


\begin{proof}
(i) Let $\{(u_k,v_k)\}\subset W_\lambda$ be a $(PS)_c$ sequence of the functional $J_\lambda$, namely $J_{\lambda}\left(u_{k}, v_{k}\right) =c+o_k(1)$ and $\|J_{\lambda}^{\prime}\left(u_{k}, v_{k}\right)\|_{\lambda}=o_k(1)$. Then we have
$$\begin{aligned}
 \left(\frac{1}{p}-\frac{1}{2\gamma}\right)\left\|\left(u_{k}, v_{k}\right)\right\|_{\lambda}^{p}=&J_{\lambda}\left(u_{k}, v_{k}\right)-\frac{1}{2\gamma}\left\langle J_{\lambda}^{\prime}\left(u_{k}, v_{k}\right),\left(u_{k}, v_{k}\right)\right\rangle \\ \leq & c+o_k(1)+\frac{1}{2\gamma}o_k(1)\|\left(u_{k}, v_{k}\right)\|_{\lambda},  \end{aligned}$$
which implies that $\{(u_k,v_k)\}$ is bounded in $W_\lambda$.

(ii) Since $\{(u_k,v_k)\}$ is bounded in $W_\lambda$, we have that
$\lim\limits_{k\rightarrow\infty}\left\langle J_{\lambda}^{\prime}\left(u_{k}, v_{k}\right),\left(u_{k}, v_{k}\right)\right\rangle=0.$ Then
$$\lim\limits_{k \rightarrow \infty}\left(\frac{1}{p}-\frac{1}{2\gamma}\right)\left\|\left(u_{k}, v_{k}\right)\right\|_{\lambda}^{p}=\lim\limits_{k \rightarrow \infty}\left[J_{\lambda}\left(u_{k}, v_{k}\right)-\frac{1}{2\gamma}\left\langle J_{\lambda}^{\prime}\left(u_{k}, v_{k}\right),\left(u_{k}, v_{k}\right)\right\rangle\right]=c.$$
Moreover, we have that
$$
\lim\limits_{k \rightarrow \infty}\left(\frac{1}{p}-\frac{1}{2\gamma}\right) \int_{V} (R_\alpha\ast F(u_k,v_k))F(u_{k}, v_{k}) \mathrm{~d} \mu=\lim _{k \rightarrow \infty}\left[J_{\lambda}\left(u_{k}, v_{k}\right)-\frac{1}{p}\left\langle J_{\lambda}^{\prime}\left(u_{k}, v_{k}\right),\left(u_{k}, v_{k}\right)\right\rangle\right]=c.
$$
As a consequence, we get that
\begin{equation}\label{p6}
\lim\limits_{k \rightarrow \infty}\left\|\left(u_{k}, v_{k}\right)\right\|_{\lambda}^{p}=\lim\limits_{k \rightarrow \infty} \int_{V} (R_\alpha\ast F(u_k,v_k))F(u_{k}, v_{k}) \mathrm{~d} \mu=\frac{2\gamma p c}{2\gamma-p}.
\end{equation}

For any $(u, v) \in W_\lambda$, by (\ref{p1}), we get that
$$
\begin{aligned}
\left\langle J_{\lambda}^{\prime}(u, v),(u, v)\right\rangle & =\|(u, v)\|_{\lambda}^{p}-\int_{V}(R_\alpha\ast F(u,v)) F(u, v) \mathrm{~d} \mu \\
& \geq\|(u, v)\|_{\lambda}^{p}-C_{N,\alpha,M_F}\|(u,v)\|^{2\gamma}_{\lambda}.
\end{aligned}
$$
Note that $2\gamma>p$. Let $\rho=\left(\frac{3}{4 C_{N,\alpha,M_{F} }}\right)^{\frac{1}{2\gamma-p}}>0$. If $\|(u, v)\|_{\lambda} \leq \rho$, then one gets that
$$
\left\langle J_{\lambda}^{\prime}(u, v),(u, v)\right\rangle \geq \frac{1}{4}\|(u, v)\|_{\lambda}^{p}.
$$

Let $c_0=\frac{(2\gamma-p) \rho^{p}}{2\gamma p}$. If $c<c_0$, we prove that  $c=0.$  In fact, by (\ref{p6}), one has that
$$
\lim _{k \rightarrow \infty}\left\|\left(u_{k}, v_{k}\right)\right\|_{\lambda}^{p}=\frac{2\gamma p c}{2\gamma-p}<\rho^{p}.
$$
Thus $\left\|\left(u_{k}, v_{k}\right)\right\|_{\lambda}\leq \rho$ for $k$ large enough. The above arguments yield that
$$
o_{k}(1)\left\|\left(u_{k}, v_{k}\right)\right\|_{\lambda}\geq\left\langle J_{\lambda}^{\prime}\left(u_{k}, v_{k}\right),\left(u_{k}, v_{k}\right)\right\rangle \geq \frac{1}{4}\left\|\left(u_{k}, v_{k}\right)\right\|_{\lambda}^{p},
$$
which implies $\left\|\left(u_{k}, v_{k}\right)\right\|_{\lambda} \rightarrow 0$, and hence $c=0$. 
\end{proof}

\begin{lm}\label{lm12}
Assume that $(F_1)$ and  $(A_1)$-$(A'_2)$ hold.  Let $c_*$ be a fixed constant. For any $\varepsilon>0$, there exist $\lambda_{\varepsilon}>0$ and $r_{\varepsilon}>0$ such that if $\{(u_k,v_k)\}\subset W_\lambda$ is a $(PS)_c$ sequence of the functional $J_\lambda$ with $c\leq c_*$ and $\lambda\geq\lambda_{\varepsilon}$, then we have that
\begin{eqnarray*}
\underset{k\rightarrow\infty}{\limsup}\int_{V\backslash B_{r_\varepsilon}} \left(R_\alpha\ast F(u_k,v_k)\right)F(u_k, v_k) \mathrm{~d} \mu\leq\varepsilon.
\end{eqnarray*}

\end{lm}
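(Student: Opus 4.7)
The plan is to exploit hypothesis $(A'_2)$: the sets $\{a\leq M_1\}$ and $\{b\leq M_2\}$ are finite, so outside a fixed ball the potentials stay bounded away from zero. For large $\lambda$ this forces the $\ell^p$-tails of a bounded $(PS)_c$ sequence to be small, and the discrete HLS inequality transports this smallness to the Choquard integrand.

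First I would invoke Lemma \ref{pb} to extract a $\lambda$-independent bound $\|(u_k,v_k)\|_\lambda \leq C_*$ for $k$ large, with $C_*$ depending only on $c_*,\gamma,p$: indeed the norm limit $\frac{2\gamma pc}{2\gamma-p}$ is $\leq\frac{2\gamma pc_*}{2\gamma-p}$ uniformly in $\lambda$. Then I would fix $r_\varepsilon$ so that $\{a\leq M_1\}\cup\{b\leq M_2\}\subset B_{r_\varepsilon}$ (permitted by $(A'_2)$), set $M:=\min\{M_1,M_2\}$, and observe that on $V\setminus B_{r_\varepsilon}$ one has $\lambda M+1\leq \lambda a+1$ and $\lambda M+1\leq\lambda b+1$. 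Consequently,
$$(\lambda M+1)\bigl(\|u_k\|_{p,V\setminus B_{r_\varepsilon}}^p+\|v_k\|_{p,V\setminus B_{r_\varepsilon}}^p\bigr)\leq \|(u_k,v_k)\|_\lambda^p\leq C_*^p,$$
so the tail $\ell^p$-norms decay like $(\lambda M+1)^{-1/p}$, uniformly in $k$.

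Next I would apply Lemma \ref{l8} with $r=s=\frac{2N}{N+\alpha}$, keeping only one factor localized: writing $\chi=\mathbf{1}_{V\setminus B_{r_\varepsilon}}$,
$$\int_{V\setminus B_{r_\varepsilon}}(R_\alpha\ast F(u_k,v_k))F(u_k,v_k)\,\mathrm{d}\mu\leq C_{N,\alpha}\|F(u_k,v_k)\|_{\frac{2N}{N+\alpha}}\cdot\|\chi F(u_k,v_k)\|_{\frac{2N}{N+\alpha}}.$$
The first factor is bounded uniformly in $k,\lambda$ by repeating the calculation leading to \eqref{p1}, now with the $\lambda$-independent bound $C_*$ in hand. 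For the second factor, combining \eqref{04} with the discrete embedding $\|w\|_q\leq\|w\|_p$ (valid on counting measure for $q\geq p$, here with $q=\frac{2N\gamma}{N+\alpha}>p$),
$$\|\chi F(u_k,v_k)\|_{\frac{2N}{N+\alpha}}^{\frac{2N}{N+\alpha}}\leq C\bigl(\|u_k\|_{p,V\setminus B_{r_\varepsilon}}^{\frac{2N\gamma}{N+\alpha}}+\|v_k\|_{p,V\setminus B_{r_\varepsilon}}^{\frac{2N\gamma}{N+\alpha}}\bigr)\leq \frac{C'}{(\lambda M+1)^{\frac{2N\gamma}{p(N+\alpha)}}}.$$
Combining, the target integral is dominated by $C''(\lambda M+1)^{-\gamma/p}$, so choosing $\lambda_\varepsilon$ large enough makes it $\leq\varepsilon$.

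The main obstacle I anticipate is preserving $\lambda$-independence throughout: the whole argument hinges on $C_*$ in the first step not depending on $\lambda$, which ultimately uses only the trivial inequality $\|(u_k,v_k)\|_p\leq\|(u_k,v_k)\|_\lambda$. A secondary but conceptually distinctive point is the ``inverted'' discrete interpolation $\ell^p\hookrightarrow\ell^q$ for $q\geq p$, which is what lets me absorb the higher-power tail norm into the $\ell^p$-tail estimate; crucially, no compactness of embeddings is needed, which is precisely why the weaker hypothesis $(A'_2)$ suffices here.
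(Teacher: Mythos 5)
Your proof is correct, and its second half (the HLS inequality with one factor localized to $V\setminus B_{r_\varepsilon}$, the first factor bounded uniformly via the $\lambda$-independent $(PS)_{c}$ bound, and the second factor absorbed into tail $\ell^p$-norms through $\ell^p\hookrightarrow\ell^q$ for $q=\frac{2N\gamma}{N+\alpha}>p$) coincides with the paper's argument. Where you diverge is in how the tail $\ell^p$-norms are controlled. The paper splits $V\setminus B_r$ into $\Omega_r^{+}=\{|x|>r,\,a\geq M\}$ and $\Omega_r^{-}=\{|x|>r,\,a<M\}$, handling the first by the factor $(1+\lambda M)^{-1}$ and the second by a H\"older estimate against $\mu(\Omega_r^{-})^{1-\frac1q}$, which tends to $0$ as $r\to\infty$ because $(A'_2)$ makes $\{a\leq M_1\}$ finite; this requires a joint limit $\lambda, r\to\infty$. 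You instead use the finiteness in $(A'_2)$ at the outset to choose $r_\varepsilon$ with $\{a\leq M_1\}\cup\{b\leq M_2\}\subset B_{r_\varepsilon}$, so the low-potential region is simply empty and only the $\lambda\to\infty$ mechanism remains. This is a legitimate streamlining: it removes one of the two limiting processes, and it yields the explicit quantitative decay $C''(\lambda M+1)^{-\gamma/p}$ for the tail integral, from which $\lambda_\varepsilon$ can be read off directly. The only point to keep explicit is that the constant $C_*$ bounding $\|(u_k,v_k)\|_\lambda$ comes from Lemma \ref{pb} with $c\leq c_*$ and is independent of $\lambda$, which you correctly flag as the crux.
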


\begin{proof}
For $r\geq1$, let
\begin{eqnarray*}
\Omega^{+}_{r}=\{x\in V: |x|>r,~ a(x)\geq M\},\qquad \text{and}\qquad \Omega^{-}_{r}=\{x\in V: |x|> r,~ a(x)< M\}.
\end{eqnarray*}
 By (ii) of Lemma \ref{pb}, one has that
\begin{eqnarray*}
\int_{\Omega^{+}_{r}}|u_k|^p\mathrm{~d}\mu&\leq&\frac{1}{1+\lambda M}\int_{V}(1+\lambda a(x))|u_k|^p\mathrm{~d}\mu\\&\leq&\frac{1}{1+\lambda M}\left(\frac{2\gamma pc_*}{2\gamma-p}+o_k(1)\right)
\\&\rightarrow&0, \qquad\lambda\rightarrow\infty.
\end{eqnarray*}
For $q>1$, by the H\"{o}lder inequality and ($A'_2$), for $k$ large enough, we get that
\begin{eqnarray*}
\int_{\Omega^{-}_{r}}|u_k|^p\mathrm{~d}\mu&\leq&\left(\int_{V}|u_k|^{pq}\mathrm{~d}\mu\right)^{\frac{1}{q}}\left(\mu(\Omega^{-}_{r})\right)^{1-\frac{1}{q}}\\&=&\|u_k\|^{p}_{pq}\left(\mu(\Omega^{-}_{r})\right)^{1-\frac{1}{q}}\\&\leq&\|u_k\|^p_{p}\left(\mu(\Omega^{-}_{r})\right)^{1-\frac{1}{q}}\\ &\leq&\|(u_k,v_k)\|^p_{\lambda}\left(\mu(\Omega^{-}_{r})\right)^{1-\frac{1}{q}}\\ &\leq &\left(\frac{2pc_*}{p-1}+1\right)\left(\mu(\Omega^{-}_{r})\right)^{1-\frac{1}{q}}\\&\rightarrow&0, \qquad r\rightarrow\infty.
\end{eqnarray*}
Then we get that
$$\int_{V\backslash B_r}|u_k|^p\mathrm{~d}\mu=\int_{\Omega^{+}_{r}}|u_k|^p\mathrm{~d}\mu+\int_{\Omega^{-}_{r}}|u_k|^p\mathrm{~d}\mu\rightarrow 0, \qquad \lambda,~r\rightarrow\infty.$$
Similarly, we have
$$\int_{V\backslash B_r}|v_k|^p\mathrm{~d}\mu=\int_{\Omega^{+}_{r}}|v_k|^p\mathrm{~d}\mu+\int_{\Omega^{-}_{r}}|v_k|^p\mathrm{~d}\mu\rightarrow 0, \qquad \lambda,~r\rightarrow\infty.$$

Let $\phi\in C(V)$ such that $\phi(x)=1$ for $|x|> r$ and $\phi(x)=0$ for $|x|\leq r$. Similar to (\ref{p1}), we have that

$$\begin{aligned}
     &\int_{V\backslash B_{r}} \left(R_\alpha\ast F(u_k,v_k)\right)F(u_k, v_k) \mathrm{~d} \mu\\=&\int_{V} \left(R_\alpha\ast F(u_k,v_k)\right)F(\phi(u_k, v_k)) \mathrm{~d} \mu\\ \leq & C_{N,\alpha}\left(\int_V|F(u_k,v_k)|^{\frac{2N}{N+\alpha}}\mathrm{~d}\mu\right)^{\frac{N+\alpha}{2N}} \left(\int_V|F(\phi(u_k,v_k))|^{\frac{2N}{N+\alpha}}\mathrm{~d}\mu\right)^{\frac{N+\alpha}{2N}}\\ \leq& C_{N,\alpha,M_F}\left(\int_V(|u_k|^{\gamma}+|v_k|^{\gamma})^{\frac{2N}{N+\alpha}}\mathrm{~d}\mu\right)^{\frac{N+\alpha}{2N}}\left(\int_V(|\phi u_k|^{\gamma}+|\phi v_k|^{\gamma})^{\frac{2N}{N+\alpha}}\mathrm{~d}\mu\right)^{\frac{N+\alpha}{2N}}\\ \leq& C_{N,\alpha,M_F}\left(\int_V\left(|u_k|^{\frac{2N\gamma}{N+\alpha}}+|v_k|^{\frac{2N\gamma}{N+\alpha}}\right)\mathrm{~d}\mu\right)^{\frac{N+\alpha}{2N}}\left(\int_V\left(|\phi u_k|^{\frac{2N\gamma}{N+\alpha}}+|\phi v_k|^{\frac{2N\gamma}{N+\alpha}}\right)\mathrm{~d}\mu\right)^{\frac{N+\alpha}{2N}}\\ \leq &C_{N,\alpha,M_F}\left(\|u_k\|^{\gamma}_{p}+\|v_k\|^{\gamma}_{p}\right)\left(\|u_k\|^{\gamma}_{\ell^{p}(V\backslash B_{r})}+\|v_k\|^{\gamma}_{\ell^{p}(V\backslash B_{r})}\right)\\ \leq & C_{N,\alpha,M_F}\|(u_k,v_k)\|^{\gamma}_{\lambda}\left(\|u_k\|^{\gamma}_{\ell^{p}(V\backslash B_{r})}+\|v_k\|^{\gamma}_{\ell^{p}(V\backslash B_{r})}\right)\\ \leq & C_{N,\alpha,M_F}(\frac{2\gamma p c_*}{2\gamma-p}+1)^{\frac{\gamma}{p}}\left(\|u_k\|^{\gamma}_{\ell^{p}(V\backslash B_{r})}+\|v_k\|^{\gamma}_{\ell^{p}(V\backslash B_{r})}\right)\\ \rightarrow &0,\qquad\lambda,~r\rightarrow \infty.
\end{aligned}$$

\end{proof}

\begin{lm}\label{lm10}
 Assume that $(F_1)$ and  $(A_1)$-$(A'_2)$ hold. Let $\{(u_k,v_k)\}\subset W_\lambda$ be a $(PS)_c$ sequence of the functional $J_\lambda$. Passing to a subsequence if necessary, there exists $(u,v)\in W_\lambda$ such that
\begin{itemize}
\item[(i)] $(u_k,v_k)\rightharpoonup (u,v),\qquad \text{in}~W_{\lambda}$;
\item[(ii)] $(u_k,v_k)\rightarrow (u,v),\qquad \text{pointwise~in}~V$;
\item[(iii)] $J'_\lambda(u,v)=0,\qquad \text{in}~W^{*}_{\lambda}$.
\end{itemize}
\end{lm}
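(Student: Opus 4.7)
By Lemma \ref{pb}(i) the sequence $\{(u_k,v_k)\}$ is bounded in the reflexive Banach space $W_\lambda$, so along a subsequence $(u_k,v_k) \rightharpoonup (u,v)$ in $W_\lambda$ for some $(u,v)\in W_\lambda$, giving (i). The continuous inclusion $W_\lambda \hookrightarrow \ell^p(\mathbb{Z}^N)$ makes each point evaluation $u \mapsto u(x)$ a bounded linear functional on $W_\lambda$, so weak convergence forces $u_k(x) \to u(x)$ and $v_k(x) \to v(x)$ for every $x \in \mathbb{Z}^N$, which is (ii).

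For (iii), since $C_c(\mathbb{Z}^N) \times C_c(\mathbb{Z}^N)$ is dense in $W_\lambda$ (by truncation in the weighted norm $\|\cdot\|_{\lambda,h}$), it suffices to show $\langle J'_\lambda(u,v),(\phi,\psi)\rangle = 0$ for all $(\phi,\psi) \in C_c(\mathbb{Z}^N)\times C_c(\mathbb{Z}^N)$. From $J'_\lambda(u_k,v_k) \to 0$ in $W_\lambda^*$ one has $\langle J'_\lambda(u_k,v_k),(\phi,\psi)\rangle \to 0$, and I would pass to the limit term by term. The local pieces $\int |\nabla u_k|^{p-2}\nabla u_k \nabla \phi\, d\mu$ and $\int (\lambda a + 1)|u_k|^{p-2} u_k \phi\, d\mu$ (with the analogous $(v_k,\psi)$ terms) are finite sums over $\mathrm{supp}(\phi)$ and its neighbors, and converge to the corresponding expressions for $(u,v)$ by continuity of $t \mapsto |t|^{p-2}t$ combined with (ii).

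The main obstacle is the nonlocal Choquard contribution $\int_{\mathbb{Z}^N} (R_\alpha \ast F(u_k,v_k)) F_u(u_k,v_k)\phi\, d\mu$. I would split its difference with the limit as
\[
\int_{\mathbb{Z}^N} (R_\alpha \ast F(u_k,v_k))\bigl[F_u(u_k,v_k)-F_u(u,v)\bigr]\phi\, d\mu + \int_{\mathbb{Z}^N} \bigl(R_\alpha \ast [F(u_k,v_k)-F(u,v)]\bigr) F_u(u,v)\phi\, d\mu.
\]
Since $\tfrac{2N\gamma}{N+\alpha} > p$, boundedness of $(u_k,v_k)$ in $W_\lambda$ forces boundedness in $\ell^{\frac{2N\gamma}{N+\alpha}}(\mathbb{Z}^N)$, hence of $F(u_k,v_k)$ in $\ell^{\frac{2N}{N+\alpha}}$ by $(F_1)$(ii), and by Lemma \ref{l8} the convolution $R_\alpha\ast F(u_k,v_k)$ is bounded in $\ell^{\frac{2N}{N-\alpha}}$. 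In the first integral, $[F_u(u_k,v_k)-F_u(u,v)]\phi$ is supported in the fixed finite set $\mathrm{supp}(\phi)$ and tends to $0$ pointwise, hence strongly in $\ell^{\frac{2N}{N+\alpha}}$, and H\"older with the dual exponent relation $\tfrac{N-\alpha}{2N}+\tfrac{N+\alpha}{2N}=1$ closes it. In the second integral I would use the symmetry of $R_\alpha$ to move it onto the finitely supported factor $F_u(u,v)\phi$, producing a fixed element of $\ell^{\frac{2N}{N-\alpha}}$; pairing against $F(u_k,v_k)-F(u,v)$, which tends to $0$ pointwise and is dominated by the same Young-inequality bound used in the proof of (\ref{p5}) in Lemma \ref{pa}, the discrete dominated convergence theorem finishes this piece. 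Combined with the analogous Choquard term involving $F_v$ and $\psi$, this yields $\langle J'_\lambda(u,v),(\phi,\psi)\rangle = 0$, as required.
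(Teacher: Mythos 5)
Your proposal is correct and follows essentially the same route as the paper: boundedness plus reflexivity for (i), pointwise convergence from the discrete embedding for (ii), and for (iii) the identical splitting of the Choquard term into the two pieces controlled by the HLS inequality and the finite support of the test function. The only cosmetic difference is in the second piece, $\int_V \bigl(R_\alpha\ast[F(u_k,v_k)-F(u,v)]\bigr)F_u(u,v)\phi\,\mathrm{d}\mu$: you move $R_\alpha$ onto the finitely supported factor by symmetry and pair against $F(u_k,v_k)-F(u,v)$, whereas the paper extracts a weak limit of $R_\alpha\ast F(u_k,v_k)$ in $\ell^{\frac{2N}{N-\alpha}}$ and tests it against $F_u(u,v)\phi$ --- these are dual formulations of the same step.
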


\begin{proof}
(i)
By Lemma \ref{pb} (i), one gets that $\{(u_k,v_k)\}$ is bounded in $W_\lambda$.
Then up to a subsequence, there exists $(u,v)\in W_\lambda$ such that $(u_k,v_k)\rightharpoonup (u,v)$ as $k\rightarrow\infty$.

(ii) Clearly, $\{(u_k,v_k)\}\subset W_{\lambda}$ is bounded in $\ell^p(V,\mathbb{R}^2)$, and hence bounded in $\ell^{\infty}(V,\mathbb{R}^2)$. Therefore, by diagonal principle, there exists a subsequence of $\{(u_k,v_k)\}$  pointwise converging to $(u,v)$.

(iii) We only neeed to prove that for any $(\phi,\psi)\in C_{c}(V)\times C_c(V)$, $\langle J'_\lambda(u,v),(\phi,\psi)\rangle=0.$
For any $\phi\in C_{c}(V)$, assume that $\text{supp}(\phi)\subseteq B_{r}$ with $r>1$. Since $B_{r+1}\subset V$ is a finite set and $u_k\rightarrow u $ pointwise in $V$ as $k\rightarrow\infty$, for any $s\geq 1$,
\begin{equation}\label{84}
u_k\rightarrow u,\qquad \text{in}~\ell^{s}(B_{r+1}).
\end{equation}
Note that
$$\begin{aligned}
 &\langle J'_\lambda(u_k,v_k),(\phi,\psi)\rangle \\=&
\int_{V}|\nabla u_k|^{p-2} \nabla u_k \nabla \phi\mathrm{~d}\mu+\int_{V}(\lambda a+1)|u_k|^{p-2} u_k \phi \mathrm{~d} \mu-\frac{1}{\gamma}\int_{V} \left(R_\alpha\ast F(u_k,v_k)\right) F_{u}(u_k, v_k)\phi\mathrm{~d}\mu\\ &+\int_{V}|\nabla v_k|^{p-2} \nabla v_k \nabla \psi \mathrm{~d} \mu +\int_{V}(\lambda b+1)|v_k|^{p-2} v_k \psi \mathrm{~d} \mu-\frac{1}{\gamma}\int_{V} \left(R_\alpha\ast F(u_k,v_k)\right) F_{v}(u_k, v_k)\psi\mathrm{~d}\mu\\=&-\int_{V}\left(\Delta_{p} u_k\right) \phi \mathrm{~d} \mu+\int_{V}(\lambda a+1)|u_k|^{p-2} u_k \phi\mathrm{~d} \mu-\frac{1}{\gamma}\int_{V} \left(R_\alpha\ast F(u_k,v_k)\right) F_{u}(u_k, v_k)\phi\mathrm{~d}\mu\\ &-\int_{V}\left(\Delta_{p} v_k\right) \psi\mathrm{~d} \mu +\int_{V}(\lambda b+1)|v_k|^{p-2} v_k \psi\mathrm{~d} \mu-\frac{1}{\gamma}\int_{V} \left(R_\alpha\ast F(u_k,v_k)\right) F_{v}(u_k, v_k)\psi\mathrm{~d}\mu.
\end{aligned}$$
Since $u_k\rightarrow u $ pointwise in $V$, we have
$\Delta_{p} u_k\rightarrow \Delta_{p} u$ pointwise in $V$.
Hence we get that
$$\lim\limits_{k\rightarrow\infty}\int_{V}\left(\Delta_{p} u_k-\Delta_{p} u\right)\phi\mathrm{~d}\mu=\lim\limits_{k\rightarrow\infty}\int_{B_{r+1}}\left(\Delta_{p} u_k-\Delta_{p} u\right)\phi\mathrm{~d}\mu=0,$$
and
$$\lim\limits_{k\rightarrow\infty}\int_{V}(\lambda a+1)\left(|u_k|^{p-2} u_k-|u|^{p-2} u\right) \phi \mathrm{~d} \mu=\lim\limits_{k\rightarrow\infty}\int_{B_r}(\lambda a+1)\left(|u_k|^{p-2} u_k-|u|^{p-2} u\right) \phi \mathrm{~d} \mu=0.$$
In the following, we prove that
\begin{equation}\label{p8}
\lim\limits_{k\rightarrow\infty}\int_{V} \left(R_\alpha\ast F(u_k,v_k)\right) F_{u}(u_k, v_k)\phi\mathrm{~d}\mu=\int_{V} \left(R_\alpha\ast F(u,v)\right) F_{u}(u, v)\phi\mathrm{~d}\mu.  
\end{equation}
In fact, a direct calculation yields that
$$\begin{aligned}
   & \left|\int_{V} \left[\left(R_\alpha\ast F(u_k,v_k)\right) F_{u}(u_k, v_k)-\left(R_\alpha\ast F(u,v)\right) F_{u}(u, v)\right]\phi\mathrm{~d}\mu\right|\\ \leq & \left|\int_{V} \left(R_\alpha\ast F(u_k,v_k)\right) F_{u}(u, v)\phi\mathrm{~d}\mu-\int_{V} \left(R_\alpha\ast F(u,v)\right) F_{u}(u, v)\phi\mathrm{~d}\mu\right|\\&+\left|\int_{V} \left(R_\alpha\ast F(u_k,v_k)\right)\left(F_{u}(u_k, v_k)- F_{u}(u, v)\right)\phi\mathrm{~d}\mu\right|\\ =:& J_1+J_2.
\end{aligned}$$

We first prove that $J_1\rightarrow 0$ as $k\rightarrow\infty$. By (\ref{04}), (\ref{p}) and the boundedness of $\left\{\left(u_{k}, v_{k}\right)\right\}$ in $W_\lambda$, we get that
$$\|R_\alpha\ast F(u_k, v_k)\|_{\frac{2N}{N-\alpha}}\leq C_{N,\alpha}\|F(u_k,v_k)\|_{\frac{2N}{N+\alpha}}\leq C_{N,\alpha,M_F}\|(u_k,v_k)\|^{\gamma}_{\lambda}<\infty.$$ Thus, up to a subsequence, we have that
$R_\alpha\ast F(u_k,v_k)\rightharpoonup R_\alpha\ast F(u,v)$ in $\ell^{\frac{2N}{N-\alpha}}(V,\mathbb{R}^2)$. Note that $F_u(u,v)\phi\in \ell^{\frac{2N}{N+\alpha}}(V,\mathbb{R}^2)$, then $J_1\rightarrow0$ as $k\rightarrow\infty$.

Next, we prove that $J_2\rightarrow 0$ as $k\rightarrow\infty$. By (ii), we have that $F_u(u_k,v_k)\rightarrow F_u(u,v)$ pointwise in $V$. Then combined with the boundedness of $\left\{\left(u_{k}, v_{k}\right)\right\}$ in $W_\lambda$, we obtain that
$$
\begin{aligned}
 J_2\leq & C_{N,\alpha}\left(\int_V|F(u_k,v_k)|^{\frac{2N}{N+\alpha}}\mathrm{~d}\mu\right)^{\frac{N+\alpha}{2N}} \left(\int_V\left|(F_u(u_k,v_k)-F_u(u, v))\phi\right|^{\frac{2N}{N+\alpha}}\mathrm{~d}\mu\right)^{\frac{N+\alpha}{2N}} \\ \leq &C_{N,\alpha,M_F}\|(u_k,v_k)\|^{\gamma}_{\lambda}\left(\int_{B_r}\left|(F_u(u_k,v_k)-F_u(u, v))\phi\right|^{\frac{2N}{N+\alpha}}\mathrm{~d}\mu\right)^{\frac{N+\alpha}{2N}}\\ \rightarrow & 0.
\end{aligned}
$$
Therefore, we prove that (\ref{p8}) holds. 

By similar arguments, we have that
$$\lim\limits_{k\rightarrow\infty}\int_{V}\left(\Delta_{p} v_k-\Delta_{p} v\right)\psi\mathrm{~d}\mu=\lim\limits_{k\rightarrow\infty}\int_{B_{r+1}}\left(\Delta_{p} v_k-\Delta_{p} v\right)\psi\mathrm{~d}\mu=0,$$
$$\lim\limits_{k\rightarrow\infty}\int_{V}(\lambda b+1)\left(|v_k|^{p-2} v_k-|v|^{p-2} v\right) \psi \mathrm{~d} \mu=\lim\limits_{k\rightarrow\infty}\int_{B_r}(\lambda b+1)\left(|v_k|^{p-2} v_k-|v|^{p-2} v\right) \psi \mathrm{~d} \mu=0,$$
and $$
\lim\limits_{k\rightarrow\infty}\int_{V} \left(R_\alpha\ast F(u_k,v_k)\right) F_{v}(u_k, v_k)\psi\mathrm{~d}\mu=\int_{V} \left(R_\alpha\ast F(u,v)\right) F_{v}(u, v)\psi\mathrm{~d}\mu.  
$$
The above arguments yield that
$$\langle J'_\lambda(u,v),(\phi,\psi)\rangle=\lim\limits_{k\rightarrow\infty}\langle J'_\lambda(u_k,v_k),(\phi,\psi)\rangle=0.$$
\end{proof}

\begin{lm}\label{lm11}
Assume that $(F_1)$ and  $(A_1)$-$(A'_2)$ hold. Let $\{(u_k,v_k)\}\subset W_\lambda$ be a $(PS)_c$ sequence of the functional $J_\lambda$. Up to a subsequence, there exists $(u,v)\in W_\lambda$ such that
\begin{itemize}
\item[(i)] $\underset{k\rightarrow\infty}{\lim}J_\lambda(u_k-u,v_k-v)=c-J_\lambda(u,v)$;
\item[(ii)] $\underset{k\rightarrow\infty}{\lim}J'_\lambda(u_k-u,v_k-v)=0,\qquad \text{in~}W^{*}_\lambda$.
\end{itemize}

\end{lm}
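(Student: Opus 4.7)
The plan is to run a Brezis--Lieb-style splitting on both the energy $J_\lambda$ and its derivative. Throughout, set $w_k := u_k - u$ and $z_k := v_k - v$, where $(u,v)$ is the pair supplied by Lemma \ref{lm10}.

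For part (i), I would split the energy into its $\|\cdot\|_\lambda^p$ part and its nonlocal Choquard part. Because $(u_k,v_k)\to(u,v)$ pointwise on $V$ by Lemma \ref{lm10}(ii) and the sequence is bounded in $W_\lambda$, the Brezis--Lieb lemma applied on the vertex set to $u_k,v_k$ and on the edge set to the difference sequences $u_k(y)-u_k(x)$, $v_k(y)-v_k(x)$ yields
\begin{equation*}
\|(u_k,v_k)\|_\lambda^p \;=\; \|(w_k,z_k)\|_\lambda^p \;+\; \|(u,v)\|_\lambda^p \;+\; o_k(1).
\end{equation*}
The nonlocal piece is given directly by Lemma \ref{pa}. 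Combining with the coefficients $\tfrac{1}{p}$ and $-\tfrac{1}{2\gamma}$ from the definition of $J_\lambda$ and using $J_\lambda(u_k,v_k)\to c$ gives $J_\lambda(w_k,z_k)\to c-J_\lambda(u,v)$.

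For part (ii), I test $J'_\lambda(w_k,z_k)$ against an arbitrary $(\phi,\psi)\in W_\lambda$ with $\|(\phi,\psi)\|_\lambda\le 1$ and decompose
\begin{equation*}
\langle J'_\lambda(w_k,z_k),(\phi,\psi)\rangle \;=\; \langle J'_\lambda(u_k,v_k),(\phi,\psi)\rangle \;-\; \langle J'_\lambda(u,v),(\phi,\psi)\rangle \;+\; \mathcal{R}_k(\phi,\psi),
\end{equation*}
where $\mathcal{R}_k$ collects the three Brezis--Lieb remainders (gradient, potential, nonlocal). The first term on the right is $o_k(1)\|(\phi,\psi)\|_\lambda$ by the $(PS)_c$ hypothesis; the second vanishes by Lemma \ref{lm10}(iii); so the proof reduces to $\|\mathcal{R}_k\|_{W^*_\lambda}\to 0$. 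For the gradient and potential remainders I would apply the pointwise inequality valid for $p\ge 2$,
\begin{equation*}
\bigl||a+b|^{p-2}(a+b)-|b|^{p-2}b-|a|^{p-2}a\bigr|^{p/(p-1)} \;\le\; \varepsilon\,|b|^p + C_\varepsilon\,|a|^p,
\end{equation*}
with $a=u(x)$, $b=w_k(x)$ on vertices and the analogous expression on edges, and then use pointwise convergence together with Lebesgue dominated convergence to conclude that these residuals vanish in $\ell^{p/(p-1)}$; H\"older against $\phi,\psi$ then yields the desired $W^*_\lambda$-bound.

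The Choquard remainder requires the most care: one must show, uniformly in $(\phi,\psi)$ on the unit ball of $W_\lambda$, that
\begin{equation*}
\int_V \bigl[(R_\alpha\ast F(u_k,v_k))F_u(u_k,v_k)-(R_\alpha\ast F(w_k,z_k))F_u(w_k,z_k)-(R_\alpha\ast F(u,v))F_u(u,v)\bigr]\phi\,d\mu \;\to\; 0,
\end{equation*}
and similarly for the $F_v$ piece paired with $\psi$. I would follow the template of Lemma \ref{pa}: expand via the mean value theorem, exploit the degree-$(\gamma-1)$ homogeneity of $F_u,F_v$ and Young's inequality to dominate the integrand by an envelope of the form $\varepsilon(|w_k|^\tau+|z_k|^\tau)+C_\varepsilon(|u|^\tau+|v|^\tau)$ with $\tau=\tfrac{2N\gamma}{N+\alpha}$, and then apply the HLS inequality \eqref{bo} together with the boundedness of $(u_k,v_k)$ in $\ell^\tau$ and Lebesgue dominated convergence, finally sending $\varepsilon\to 0$. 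The main obstacle is precisely this uniform-in-$\phi$ control: unlike in Lemma \ref{pa}, where the Choquard difference pairs against itself, here $\phi$ is decoupled from $(u_k,v_k)$, so one must promote the scalar limit of Lemma \ref{pa} to a dual-norm estimate by carefully balancing the HLS exponents against $\|\phi\|_p\le\|(\phi,\psi)\|_\lambda$.
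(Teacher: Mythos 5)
Your part (i) is essentially the paper's argument: Br\'ezis--Lieb on the potential terms, the discrete Br\'ezis--Lieb for the gradient (the paper cites Corollary 11 of \cite{HL} for this), and Lemma \ref{pa} for the Choquard term. Part (ii), however, is a genuinely different route. The paper does \emph{not} decompose $\langle J'_\lambda(w_k,z_k),(\phi,\psi)\rangle$ through $J'_\lambda(u_k,v_k)$ and $J'_\lambda(u,v)$ plus a remainder; it tests only against $(\phi,\psi)\in C_c(V)\times C_c(V)$ with $\mathrm{supp}(\phi)\subseteq B_r$ and estimates each of the six terms of $\langle J'_\lambda(w_k,z_k),(\phi,\psi)\rangle$ directly, using that the gradient and potential integrals localize to the finite sets $B_{r+1}$ and $B_r$ where $w_k,z_k\to 0$ pointwise forces them to vanish, and using HLS plus H\"older to reduce the Choquard term to $\bigl[\int_{B_r}(|w_k|^{2N\gamma/(N+\alpha)}+|z_k|^{2N\gamma/(N+\alpha)})\,d\mu\bigr]^{(N+\alpha)(\gamma-1)/(2N\gamma)}\|\phi\|_p$, which again dies on the finite ball. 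In particular the $(PS)$ property of $(u_k,v_k)$ and the criticality of $(u,v)$ are never invoked in the paper's part (ii). Your approach is the standard continuum-style ``Br\'ezis--Lieb for derivatives'': it is heavier (you must prove the $\ell^{p/(p-1)}$ convergence of the gradient/potential residuals via the $\varepsilon$--$C_\varepsilon$ inequality, and the Choquard remainder estimate you flag as the main obstacle really is the hard step, though your exponent bookkeeping $\tfrac{N-\alpha}{2N}+\tfrac{(N+\alpha)(\gamma-1)}{2N\gamma}+\tfrac{N+\alpha}{2N\gamma}=1$ with $\tfrac{2N\gamma}{N+\alpha}>p$ does close), but it buys you a bound that is genuinely uniform over the whole unit ball of $W_\lambda$. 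The paper's shortcut pays for its brevity with an $o_k(1)$ that depends on the support radius $r$ of the test function, so its final passage to $\sup_{\|(\phi,\psi)\|_\lambda=1}$ implicitly needs a density/uniformity argument that your version avoids. Both routes are viable; yours is more robust, the paper's exploits the discrete locality of the graph.
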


\begin{proof}
It follows from Lemma \ref{lm10} that
\begin{eqnarray*}
\begin{array}{ll}
\|(u_k,v_k)\|_{\lambda}\leq C,\qquad \text{and}\qquad
(u_k,v_k)\rightarrow (u,v), \qquad \text{pointwise~in}~V.
\end{array}
\end{eqnarray*}

(i) By the Br\'{e}zis-Lieb lemma, we have that
$$
\int_{V}(\lambda a+1)|u_k|^p\mathrm{~d}\mu-\int_{V}(\lambda a+1)|u_k-u|^p\mathrm{~d}\mu=\int_{V}(\lambda a+1)|u|^p \mathrm{~d}\mu+o_k(1),
$$
and 
$$
\int_{V}(\lambda b+1)|v_k|^p\mathrm{~d}\mu-\int_{V}(\lambda b+1)|v_k-v|^p\mathrm{~d}\mu=\int_{V}(\lambda b+1)|v|^p \mathrm{~d}\mu+o_k(1).
$$

By Corollary 11 in \cite{HL}, one gets that
$$
\int_{V}|\nabla u_k|^p\mathrm{~d}\mu-\int_{V}|\nabla (u_k-u)|^p \mathrm{~d}\mu=\int_{V}|\nabla u|^p \mathrm{~d}\mu+o_k(1),
$$
and
$$
\int_{V}|\nabla v_k|^p\mathrm{~d}\mu-\int_{V}|\nabla (v_k-v)|^p \mathrm{~d}\mu=\int_{V}|\nabla v|^p \mathrm{~d}\mu+o_k(1).
$$
Then combined with Lemma \ref{pa}, one gets that
\begin{eqnarray*}
J_\lambda(u_k,v_k)-J_\lambda(u_k-u,v_k-v)=J_\lambda(u,v)+o_k(1).
\end{eqnarray*}
Note that $\underset{k\rightarrow\infty}{\lim}J_\lambda(u_k,v_k)=c$, then we obtain that
\begin{eqnarray*}
J_\lambda(u_k-u,v_k-v)=c-J_\lambda(u,v)+o_k(1).
\end{eqnarray*}

(ii)
For any $\phi\in C_{c}(V)$, let $\text{supp}(\phi)\subseteq B_{r}$ with $r>1$.  Denote 
$$w_k=u_k-u,\qquad z_k=v_k-v.$$
Then we have that $w_k\rightarrow0$ and $z_k\rightarrow 0$ pointwise in $V$.

For any $(\phi,\psi)\in C_c(V)\times C_c(V)$, we have that 
$$\begin{aligned}
 &\langle J'_\lambda(w_k,z_k),(\phi,\psi)\rangle \\=&
\int_{V}|\nabla w_k|^{p-2} \nabla w_k \nabla \phi\mathrm{~d}\mu+\int_{V}(\lambda a+1)|w_k|^{p-2} w_k \phi \mathrm{~d} \mu-\frac{1}{\gamma}\int_{V} \left(R_\alpha\ast F(w_k,z_k)\right) F_{u}(w_k, z_k)\phi\mathrm{~d}\mu\\ &+\int_{V}|\nabla z_k|^{p-2} \nabla z_k \nabla \psi \mathrm{~d} \mu +\int_{V}(\lambda b+1)|z_k|^{p-2} z_k \psi \mathrm{~d} \mu-\frac{1}{\gamma}\int_{V} \left(R_\alpha\ast F(w_k,z_k)\right) F_{v}(w_k, z_k)\psi\mathrm{~d}\mu.
\end{aligned}$$
By the H\"{o}lder inequality, we obtain that
$$\begin{aligned}
\left|\int_{V}\left(|\nabla w_k|^{p-2} \nabla w_k \nabla \phi\right)\mathrm{~d}\mu\right|\leq&\int_{B_{r+1}}|\nabla w_k|^{p-1} |\nabla \phi|\mathrm{~d}\mu\\ \leq &  \|\nabla w_k\|_{\ell^{p}(B_{r+1})}^{p-1} \|\nabla\phi\|_{\ell^{p}(B_{r+1})}\\\leq & o_k(1)\|(\phi,\psi)\|_{\lambda},
\end{aligned}$$
and
$$\begin{aligned}
    \left|\int_{V}(\lambda a+1)|w_k|^{p-2} w_k \phi\mathrm{~d} \mu\right|  \leq &\int_{B_r}(\lambda a+1)|w_k|^{p-1} |\phi|\mathrm{~d} \mu\\ \leq &\left(\int_{B_r} (\lambda a+1)|w_k|^{p}\mathrm{~d} \mu\right)^{\frac{p-1}{p}}\left(\int_{B_r} (\lambda a+1)|\phi|^{p}\mathrm{~d} \mu\right)^{\frac{1}{p}}\\ \leq & o_k(1)\|(\phi,\psi)\|_{\lambda}.
\end{aligned}$$

By the HLS inequality (\ref{bo}), the boundedness of $\{(w_k,z_k)\}$ in $W_\lambda$, $(w_k,z_k)\rightarrow (0,0)$ pointwise in $V$ and the H\"{o}lder inequality, one has that
$$\begin{aligned}
    &\left|\int_{V} \left(R_\alpha\ast F(w_k,z_k)\right) F_{u}(w_k, z_k)\phi\mathrm{~d}\mu\right|\\ \leq &C_{N,\alpha}\left(\int_V|F(w_k,z_k)|^{\frac{2N}{N+\alpha}}\mathrm{~d}\mu\right)^{\frac{N+\alpha}{2N}} \left(\int_V\left|F_u(w_k,z_k)\phi\right|^{\frac{2N}{N+\alpha}}\mathrm{~d}\mu\right)^{\frac{N+\alpha}{2N}} \\ \leq &C_{N,\alpha,M_F}\|(w_k,z_k)\|^{\gamma}_{\lambda}\left(\int_{B_r}|F_u(w_k,z_k)|^{\frac{2N}{N+\alpha}}|\phi|^{\frac{2N}{N+\alpha}}\mathrm{~d}\mu\right)^{\frac{N+\alpha}{2N}}\\ \leq &C_{N,\alpha,M_F} \left[\int_{B_r}\left(|w_k|^{\gamma-1}+|z_k|^{\gamma-1}\right)^{\frac{2N}{N+\alpha}}|\phi|^{\frac{2N}{N+\alpha}}\mathrm{~d}\mu \right]^{\frac{N+\alpha}{2N}}\\ \leq &C_{N,\alpha,M_F} \left[\int_{B_r}\left(|w_k|^{^{\frac{2N(\gamma-1)}{N+\alpha}}}+|z_k|^{^{\frac{2N(\gamma-1)}{N+\alpha}}}\right)|\phi|^{\frac{2N}{N+\alpha}}\mathrm{~d}\mu \right]^{\frac{N+\alpha}{2N}}\\ \leq & C_{N,\alpha,M_F} \left[\int_{B_r}\left(|w_k|^{^{\frac{2N\gamma}{N+\alpha}}}+|z_k|^{^{\frac{2N\gamma}{N+\alpha}}}\right)\mathrm{~d}\mu\right]^{\frac{(N+\alpha)(\gamma-1)}{2N\gamma}}\left(\int_{B_r}|\phi|^{\frac{2N\gamma}{N+\alpha}}\mathrm{~d}\mu\right)^{\frac{(N+\alpha)}{2N\gamma}}\\ \leq &C_{N,\alpha,M_F} \left[\int_{B_r}\left(|w_k|^{^{\frac{2N\gamma}{N+\alpha}}}+|z_k|^{^{\frac{2N\gamma}{N+\alpha}}}\right)\mathrm{~d}\mu\right]^{\frac{(N+\alpha)(\gamma-1)}{2N\gamma}}\|\phi\|_p\\ \leq &o_k(1)\|(\phi,\psi)\|_{\lambda},
\end{aligned}$$
where we have used the fact that $\frac{2N\gamma}{N+\alpha}>p$ in the sixth inequality.

By similar arguments as above, we have that
$$\left|\int_{V}|\nabla z_k|^{p-2} \nabla z_k \nabla \psi\mathrm{~d}\mu\right|\leq o_k(1)\|(\phi,\psi)\|_{\lambda},$$
$$\left|\int_{V}(\lambda b+1)|z_k|^{p-2} z_k \psi \mathrm{~d} \mu\right|\leq  o_k(1)\|(\phi,\psi)\|_{\lambda},$$
and 
$$\left|\int_{V} \left(R_\alpha\ast F(w_k,z_k)\right) F_{v}(w_k, z_k)\psi\mathrm{~d}\mu\right|\leq o_k(1)\|(\phi,\psi)\|_{\lambda}.$$
Therefore, the above results imply that
$$\left|\langle J'_\lambda(w_k,z_k),(\phi,\psi)\rangle\right|\leq o_k(1)\|(\phi,\psi)\|_{\lambda}.$$
Then we get that
$$\underset{k\rightarrow\infty}{\lim}\|J'(w_k,z_k)\|_{W^{*}_\lambda}=\underset{k\rightarrow\infty}{\lim}
\underset{\|(\phi,\psi)\|_{\lambda}=1}{\sup}|\langle J'(w_k,z_k),(\phi,\psi)\rangle|=0.$$

\end{proof}

Then the above lemmas imply a compactness result.
\begin{lm}\label{lm13} 
Let $(F_1)$ and $(A_1)$-$(A'_2)$ hold. For any $c^*>0$, there exists $\lambda^*>0$ such that $J_\lambda$ satisfies $(PS)_c$ condition for all $c\leq c^*$ and $\lambda\geq\lambda^*$.
\end{lm}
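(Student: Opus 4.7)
The approach is to take an arbitrary $(PS)_c$ sequence and decompose it, via Lemmas \ref{lm10} and \ref{lm11}, into a weak limit (which is a critical point) plus a vanishing remainder. The crux is then to force the remainder to zero in $W_\lambda$, using the tail estimate of Lemma \ref{lm12} together with the level quantization hidden in Lemma \ref{pb} (ii).

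First, fix $c^*>0$ and let $\{(u_k,v_k)\}\subset W_\lambda$ be any $(PS)_c$ sequence with $c\leq c^*$. By Lemma \ref{pb} (i) this sequence is bounded, by Lemma \ref{lm10} it admits, up to a subsequence, a weak and pointwise limit $(u,v)$ with $J_\lambda'(u,v)=0$, and by Lemma \ref{lm11} the remainder $(w_k,z_k):=(u_k-u,v_k-v)$ is itself a $(PS)_{c'}$ sequence for $J_\lambda$ with $c'=c-J_\lambda(u,v)$. Since $(u,v)$ is critical, $J_\lambda(u,v)=(\tfrac{1}{p}-\tfrac{1}{2\gamma})\|(u,v)\|_\lambda^p\geq 0$, so in particular $c'\leq c\leq c^*$, and strong convergence $(u_k,v_k)\to(u,v)$ in $W_\lambda$ is equivalent to $\|(w_k,z_k)\|_\lambda\to 0$.

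Next I would show that the Choquard interaction along $(w_k,z_k)$ vanishes, by splitting $V=B_{r}\cup(V\setminus B_{r})$. For the tail, apply Lemma \ref{lm12} with $c_*=c^*$ to the $(PS)_{c'}$ sequence $(w_k,z_k)$: for every $\varepsilon>0$ there exist $\lambda_\varepsilon, r_\varepsilon>0$ such that
$$\limsup_{k\to\infty}\int_{V\setminus B_{r_\varepsilon}}\left(R_\alpha\ast F(w_k,z_k)\right)F(w_k,z_k)\,\mathrm{~d}\mu\leq\varepsilon\qquad\text{whenever }\lambda\geq\lambda_\varepsilon.$$
For the inner part, $B_{r_\varepsilon}$ is a finite set of vertices and $(w_k,z_k)\to(0,0)$ pointwise, so the truncation $F(w_k,z_k)\mathbf{1}_{B_{r_\varepsilon}}$ converges to $0$ in $\ell^{\frac{2N}{N+\alpha}}(V)$; combined with the uniform bound $\|F(w_k,z_k)\|_{\frac{2N}{N+\alpha}}\leq C\|(w_k,z_k)\|_\lambda^\gamma$ coming from (\ref{04}) and Lemma \ref{pb} (i), the HLS inequality (\ref{bo}) gives $\int_{B_{r_\varepsilon}}(R_\alpha\ast F(w_k,z_k))F(w_k,z_k)\,\mathrm{~d}\mu\to 0$.

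Finally, Lemma \ref{pb} (ii) applied to $(w_k,z_k)$ yields
$$\frac{2\gamma pc'}{2\gamma-p}=\lim_{k\to\infty}\int_V\left(R_\alpha\ast F(w_k,z_k)\right)F(w_k,z_k)\,\mathrm{~d}\mu\leq\varepsilon,$$
while the same lemma provides the dichotomy: either $c'=0$ or $c'\geq c_0$ for some $c_0>0$ independent of $\lambda$. Choosing once and for all $\varepsilon_0<\tfrac{2\gamma p c_0}{2\gamma-p}$ and setting $\lambda^*:=\lambda_{\varepsilon_0}$, the displayed estimate excludes the alternative $c'\geq c_0$, so $c'=0$, and then $\|(w_k,z_k)\|_\lambda\to 0$ by the first equality in Lemma \ref{pb} (ii). The main subtlety to watch is the uniformity of the constants: $c_0$ must be independent of $\lambda$ and $\lambda_\varepsilon$ must depend only on $\varepsilon$ and $c^*$. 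Both features are baked into the statements of Lemmas \ref{pb} and \ref{lm12}, so the resulting threshold $\lambda^*$ depends only on $c^*$, as required.
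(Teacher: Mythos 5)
Your proof is correct and follows essentially the same route as the paper: decompose the $(PS)_c$ sequence via Lemmas \ref{lm10} and \ref{lm11} into a critical point plus a $(PS)_{c'}$ remainder, then use the dichotomy of Lemma \ref{pb}(ii) together with the tail bound of Lemma \ref{lm12} to force $c'=0$. If anything, your treatment of the finite-ball part (showing the inner Choquard integral vanishes directly via HLS and pointwise convergence) is slightly more explicit than the paper's terse contradiction argument at that step.
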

\begin{proof}
Let $c_0$ be given by Lemma \ref{pb} (ii) and choose $\varepsilon>0$ such that $\varepsilon<\frac{2\gamma pc_0}{2\gamma-p}$. Then for the given $c^*>0$, we choose $\lambda^*=\lambda_{\varepsilon}>0$ and $r_\varepsilon>0$ in Lemma \ref{lm12}.

Let $\{(u_k,v_k)\}\subset W_\lambda$ be a $(PS)_c$ sequence of the functional $J_\lambda$ with $c\leq c^*$ and $\lambda\geq\lambda^*$. By Lemma \ref{lm10}, there exists $(u,v)\in W_\lambda$ such that
\begin{eqnarray*}
(u_k,v_k)\rightharpoonup (u,v),\qquad \text{in}~W_\lambda,\qquad\text{and}\qquad (u_k,v_k)\rightarrow (u,v),\qquad \text{pointwise~in}~V.
\end{eqnarray*}
Denote $$w_k=u_k-u,\quad z_k=v_k-v.$$
By Lemma \ref{lm11}, one sees that $\{(w_k,z_k)\}\subset W_\lambda$ is a $(PS)_d$ sequence of the functional $J_\lambda$ with $d=c-J_\lambda(u,v)$. We claim that $d=0$. By contradiction, if $d\neq0$, then by Lemma \ref{pb} (ii), $d\geq c_0>0$. Moreover, we have that
\begin{eqnarray*}
\underset{k\rightarrow\infty}{\lim}\int_{V}(R_\alpha\ast F(w_k,z_k))F(w_{k}, z_{k}) \mathrm{~d} \mu=\frac{2\gamma pd}{2\gamma-p}\geq\frac{2\gamma pc_0}{2\gamma-p}.
\end{eqnarray*}
On the other hand, by Lemma \ref{lm12}, we have that
\begin{eqnarray*}
\underset{k\rightarrow\infty}{\limsup}\int_{V\setminus B_{r_{\varepsilon}}} (R_\alpha\ast F(w_k,z_k))F(w_{k}, z_{k}) \mathrm{~d} \mu\leq\varepsilon<\frac{2\gamma pc_0}{2\gamma-p}.
\end{eqnarray*}
The above two inequalities imply that $(w_k,z_k)\rightarrow (w,z)$ pintwise in $V$ with some $(w,z)\neq (0,0)$, which contradicts $(w_k,z_k)\rightarrow (0,0)$ pointwise in $V$. Hence $d=0$. By Lemma \ref{pb} (ii) again, we get that
\begin{eqnarray*}
\underset{k\rightarrow\infty}{\lim}\|(w_k,z_k)\|^{p}_{W_\lambda}=\frac{2\gamma pd}{2\gamma-p}=0,
\end{eqnarray*}
which means that $(u_k,v_k)\rightarrow (u,v)$ in $W_\lambda$.    
\end{proof}

In the following, we prove the existence of ground state solutions  to the system (\ref{00}).

\
\

{\bf Proof of Theorem \ref{th1}:} By Lemma \ref{lm6}, one gets that $J_\lambda$ satisfies the mountain-pass geometry. Hence there exists a sequence $\{(u_k,v_k)\}\subset W_\lambda$ such that
\begin{eqnarray*}\label{3-9}
J_\lambda(u_k,v_k)\rightarrow m_\lambda, \qquad\text{in~}W_\lambda,\qquad\text{and}\qquad J'_\lambda(u_k,v_k)\rightarrow 0,\qquad\text{in~} W^{*}_\lambda.
\end{eqnarray*}
Then it follows from Lemma \ref{lm10} that there exists $(u_\lambda,v_\lambda)\in W_\lambda$ such that, up to a subsequence,
\begin{eqnarray*}\label{3-0}
\begin{array}{ll}
(u_k,v_k)\rightharpoonup (u_{\lambda},v_\lambda),\qquad \text{in}~W_\lambda,\\
(u_k,v_k)\rightarrow (u_{\lambda},v_\lambda), \qquad \text{pointwise~in}~V,\\
J'_{\lambda}(u_{\lambda},v_\lambda)=0,\qquad \text{in}~W^{*}_\lambda.
\end{array}
\end{eqnarray*}
By Lemma \ref{lm13}, there exists $\lambda_0>0$ such that, for any $\lambda\geq\lambda_0$, $(u_k,v_k)\rightarrow (u_\lambda,v_\lambda)$ in $W_\lambda$.
Then it follows from Lemma \ref{pb} (ii) and $m_\lambda>0$ that
\begin{eqnarray*}
\|(u_\lambda,v_\lambda)\|^{p}_{W_\lambda}=\underset{k\rightarrow\infty}{\lim}\|(u_k,v_k)\|^{p}_{W_\lambda}=\frac{2\gamma pm_\lambda}{2\gamma-p}>0,
\end{eqnarray*}
which yields $(u_\lambda,v_\lambda)\neq(0,0)$. Then $(u_\lambda,v_\lambda)\in \mathcal{N}_\lambda$. Moreover, we have that
\begin{align*}
J_\lambda(u_\lambda,v_\lambda)=&J_\lambda(u_\lambda,v_\lambda)-\frac{1}{2\gamma}\langle J'_\lambda(u_\lambda,v_\lambda),(u_\lambda,v_\lambda)\rangle\\=&(\frac{1}{p}-\frac{1}{2\gamma})\|(u_\lambda,v_\lambda)\|^p_{\lambda}\\=&\lim\limits_{k\rightarrow\infty}(\frac{1}{p}-\frac{1}{2\gamma})\|(u_k,v_k)\|^p_{\lambda}\\=&\underset{k\rightarrow\infty}{\lim}\left(J_\lambda(u_k,v_k)-\frac{1}{2\gamma}\langle J'_\lambda(u_k,v_k),(u_k,v_k)\rangle\right)\\=& m_\lambda>0.
\end{align*}
Therefore, $(u_\lambda,v_\lambda)\in \mathcal{N}_\lambda$ is a ground state solution of the system (\ref{00}).\qed

\section{The asymptotic behavior of ground state solutions}
In this section, we prove that the ground state solution $(u_{\lambda}, v_{\lambda})$ of the system (\ref{00}) converges to a ground state solution of  the system (\ref{01}) as $\lambda \rightarrow \infty$. The following result plays a key role in the proof of Theorem \ref{th4}.

\begin{lm}\label{pc}
Let $(F_{1})$ and $(A_{1})$-$(A'_{2})$ hold. Then we have that $\lim\limits_{\lambda \rightarrow \infty}m_{\lambda} =m_{\Omega}$.    
\end{lm}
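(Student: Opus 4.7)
The plan is to sandwich $m_\lambda$ between upper and lower bounds that both converge to $m_\Omega$. For the upper bound I would in fact show $m_\lambda\leq m_\Omega$ for every $\lambda>0$. Take $(u_0,v_0)\in\mathcal{N}_\Omega$ with $J_\Omega(u_0,v_0)=m_\Omega$ and extend it by zero to a pair $(\tilde u_0,\tilde v_0)$ on $\mathbb{Z}^N$. Since $a\equiv 0$ on $\Omega_a\supset\operatorname{supp}\tilde u_0$ and $b\equiv 0$ on $\Omega_b\supset\operatorname{supp}\tilde v_0$, the $\lambda$-dependent parts of $\|\cdot\|_\lambda^p$ disappear; the discrete gradient integral over $\mathbb{Z}^N$ reduces to one over $\bar\Omega_a$ (resp.\ $\bar\Omega_b$), because a summand $|u(x)-u(y)|^p$ can be nonzero only when at least one endpoint lies in $\Omega_a$; and since $F(0,0)=0$ by homogeneity, the Choquard integral over $\mathbb{Z}^N$ agrees with the corresponding integral in $J_\Omega(u_0,v_0)$. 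Hence $J_\lambda(\tilde u_0,\tilde v_0)=J_\Omega(u_0,v_0)=m_\Omega$, and the Nehari identity transfers to place $(\tilde u_0,\tilde v_0)\in\mathcal{N}_\lambda$, giving $m_\lambda\leq m_\Omega$.

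For the matching lower bound I would pick $\lambda_k\to\infty$ and ground states $(u_k,v_k)\in\mathcal{N}_{\lambda_k}$ with $J_{\lambda_k}(u_k,v_k)=m_{\lambda_k}$. By the Nehari identity and the upper bound, $\|(u_k,v_k)\|_{\lambda_k}^p=\tfrac{2\gamma p}{2\gamma-p}m_{\lambda_k}\leq\tfrac{2\gamma p}{2\gamma-p}m_\Omega$, so $\{(u_k,v_k)\}$ is bounded in $W^{1,p}(\mathbb{Z}^N)\times W^{1,p}(\mathbb{Z}^N)$. Extract a subsequence with $(u_k,v_k)\rightharpoonup(u,v)$ weakly and pointwise on $\mathbb{Z}^N$. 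From $\lambda_k\int_V a|u_k|^p\,\mathrm{d}\mu\leq C$ we deduce $\int_V a|u_k|^p\,\mathrm{d}\mu\to 0$, so Fatou together with $a>0$ on $\Omega_a^c$ forces $u\equiv 0$ outside $\Omega_a$; similarly $v\equiv 0$ outside $\Omega_b$. Since $\Omega_a,\Omega_b$ are bounded, $(u,v)$ is finitely supported and lies in $W_\Omega$.

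The delicate step is to show $(u,v)\neq(0,0)$. Adapting the splitting of $V\setminus B_r$ from the proof of Lemma \ref{lm12} under $(A_1)$--$(A_2')$, for every $\varepsilon>0$ one obtains $r_\varepsilon$ and $K_\varepsilon$ such that $\int_{V\setminus B_{r_\varepsilon}}(R_\alpha\ast F(u_k,v_k))F(u_k,v_k)\,\mathrm{d}\mu<\varepsilon$ for $k\geq K_\varepsilon$; combined with $\int_V(R_\alpha\ast F(u_k,v_k))F(u_k,v_k)\,\mathrm{d}\mu=\|(u_k,v_k)\|_{\lambda_k}^p\geq\sigma^p$ from Lemma \ref{k9}(i), this gives $\int_{B_{r_\varepsilon}}(R_\alpha\ast F(u_k,v_k))F(u_k,v_k)\,\mathrm{d}\mu\geq\sigma^p-\varepsilon$. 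If $(u,v)$ were $(0,0)$, pointwise convergence on the finite set $B_{r_\varepsilon}$ together with the uniform $\ell^\infty$-bound on $R_\alpha\ast F(u_k,v_k)$ supplied by the discrete HLS inequality would force this integral to zero, a contradiction once $\varepsilon<\sigma^p$. With nontriviality in hand, I would pass to the limit in $\langle J'_{\lambda_k}(u_k,v_k),(\phi,\psi)\rangle=0$ for $(\phi,\psi)\in C_c(\Omega_a)\times C_c(\Omega_b)$ exactly as in Lemma \ref{lm10}(iii): the penalizing terms $\lambda_k a,\lambda_k b$ vanish on $\operatorname{supp}\phi,\operatorname{supp}\psi$, the linear pieces converge by pointwise convergence on finite supports, and the Choquard piece is handled by Lemma \ref{pa}. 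This proves $(u,v)$ is a nontrivial critical point of $J_\Omega$, so $(u,v)\in\mathcal{N}_\Omega$ and $J_\Omega(u,v)\geq m_\Omega$. Discarding the nonnegative terms $\lambda_k a,\lambda_k b$ and invoking weak lower semicontinuity/Fatou yields $\liminf_k\|(u_k,v_k)\|_{\lambda_k}^p\geq\|(u,v)\|_{W_\Omega}^p$, so $\liminf_k m_{\lambda_k}=(\tfrac{1}{p}-\tfrac{1}{2\gamma})\liminf_k\|(u_k,v_k)\|_{\lambda_k}^p\geq J_\Omega(u,v)\geq m_\Omega$. The main obstacle is precisely this nontriviality step: the weaker hypothesis $(A_2')$ blocks a direct compact-embedding argument, and one must play Lemma \ref{lm12} against the Nehari constraint to prevent the weak limit from vanishing.
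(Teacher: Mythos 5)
Your proof is correct, and it departs from the paper's at the two decisive points. For the nontriviality of the weak limit, the paper first upgrades the convergence: it proves $u_k\to u$, $v_k\to v$ strongly in $\ell^q(V)$ for $q>p$ by a Lions-type argument (if $\|u_k-u\|_\infty\not\to 0$, the concentration points must escape to infinity and the potential term $\lambda_k\int a|u_k-u|^p$ blows up, contradicting $m_{\lambda_k}\le m_\Omega$), then invokes Lemma \ref{pa} to obtain full convergence of the Choquard integral, from which $(u,v)\neq(0,0)$ follows via Lemma \ref{k9}(i). You instead play the localization estimate of Lemma \ref{lm12} (which indeed adapts to varying $\lambda_k$, since its proof only uses the uniform bound $\|(u_k,v_k)\|_{\lambda_k}^p\le \tfrac{2\gamma p\, m_\Omega}{2\gamma-p}$ together with $\lambda_k\to\infty$ and $\mu(\Omega_r^-)\to 0$) against the uniform Nehari lower bound $\sigma^p$, using the $\ell^\infty$-bound on $R_\alpha\ast F(u_k,v_k)$ from the HLS inequality; this is sound and bypasses the $\ell^q$-convergence step. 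For the conclusion, the paper argues by contradiction: Fatou only yields $\langle J_\Omega'(u,v),(u,v)\rangle\le 0$, so it projects onto $\mathcal{N}_\Omega$ via Lemma \ref{l9} with some $t\in(0,1]$ and computes $J_\Omega(tu,tv)\le t^{2\gamma}l<m_\Omega$ --- a step that genuinely requires the Choquard-integral convergence (\ref{f2}). You instead pass to the limit in the Euler--Lagrange equation to show $(u,v)$ is itself a critical point of $J_\Omega$, hence lies on $\mathcal{N}_\Omega$ with $t=1$, and close with weak lower semicontinuity of the norm; this is more direct and avoids both the projection and the contradiction structure, at the cost of carrying out the weak-formulation limit in the style of Lemma \ref{lm10}(iii) (which the paper's Theorem \ref{th4} does anyway). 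Both arguments are complete.
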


\begin{proof}
For any $\lambda>0$, since $\mathcal{N}_{\Omega} \subset \mathcal{N}_{\lambda}$, we have that $m_{\lambda} \leq m_{\Omega}$. By contradiction, suppose that there exists a a sequence $\lambda_{k} \rightarrow \infty$ such that
\begin{equation}\label{f1}
\lim _{k \rightarrow \infty} m_{\lambda_{k}}=l< m_{\Omega}.
\end{equation}
By Theorem \ref{th1}, for $\lambda_{k}$ large enough, there exists a sequence $\{(u_{k}, v_{k})\}\subset\mathcal{N}_{\lambda_k}$, ground state solutions to the system (\ref{00}), such that $J_{\lambda_{k}}(u_{k}, v_{k})=m_{\lambda_{k}}>0$. This implies that $\left\{\left(u_{k}, v_{k}\right)\right\}$ is bounded in $W_\lambda$. Hence there exists $(u, v) \in W_\lambda$ such that
$$(u_{k}, v_{k}) \rightharpoonup(u, v),\quad \text{in~}W_\lambda,\qquad (u_{k}, v_{k}) \rightarrow(u, v),\quad \text{pointwise~in~}V.$$

We claim that $u \equiv 0$ in $\Omega_{a}^{c}$ and $v \equiv 0$ in $\Omega_{b}^{c}$. In fact, if there exists $x_{0} \in \Omega_{a}^{c}$ such that $u(x_0)\neq 0$, then
$$
\begin{aligned}
m_{\lambda_k}&=J_{\lambda_{k}}(u_{k}, v_{k})\\ & =J_{\lambda_{k}}(u_{k}, v_{k})-\frac{1}{2\gamma}\left\langle J_{\lambda_{k}}^{\prime}(u_{k}, v_{k}),(u_{k}, v_{k})\right\rangle \\
& =\left(\frac{1}{p}-\frac{1}{2\gamma}\right)\left\|\left(u_{k}, v_{k}\right)\right\|_{\lambda_{k}}^{p} \\
& \geq\left(\frac{1}{p}-\frac{1}{2\gamma}\right) \int_{V}\left(\lambda_{k} a+1\right)\left|u_{k}\right|^{p} \mathrm{~d} \mu \\
& \geq\left(\frac{1}{p}-\frac{1}{2\gamma}\right) \lambda_{k} a\left(x_{0}\right)\left|u_{k}\left(x_{0}\right)\right|^{p}\\ &\rightarrow \infty, \quad k \rightarrow \infty,
\end{aligned}
$$
which contradicts (\ref{f1}). Similarly, we also have $v \equiv 0$ in $\Omega_{b}^{c}$.

Now we show that $u_k\rightarrow u, v_k\rightarrow v$ in $\ell^q(V)$ for $q>p$. Otherwise, by Lemma \ref{lm16}, there exists $\delta>0$ such that $\underset{k\rightarrow\infty}{\lim}\|u_{k}-u\|_{\infty}=\delta>0$. Then there exists a sequences $\{x_{k}\}\subset V$ such that $\left|(u_{k}-u)(x_{k})\right|\geq\frac{\delta}{2}>0$. Since $(u_{k}-u)\rightarrow 0$ pointwise in $V$, 
we have that $|x_k|\rightarrow\infty$ as $k\rightarrow\infty$.

Note that $(u_{k},v_k)\in \mathcal{N}_{\lambda_k}$ and $\mu\left(B_{r}(x_k)\cap\{x: a(x)\leq M_1\}\right)\rightarrow 0$ as $k\rightarrow\infty$, where $0<r<1$. Then we have that
\begin{align*}
m_{\lambda_k}=&J_{\lambda_{k}}(u_{k}, v_{k})\\\geq&
\left(\frac{1}{p}-\frac{1}{2\gamma}\right)\int_{B_{r}(x_k)\cap\{x: a(x)\geq M_1\}}\lambda_k a |u_{k}-u|^2 {\rm ~d}\mu\\ \geq&\left(\frac{1}{p}-\frac{1}{2\gamma}\right)\lambda_k M_1\left(\int_{B_{r}(x_k)} |u_{k}-u|^2 {\rm ~d}\mu-\int_{B_{r}(x_k)\cap\{x: a(x)\leq M_1\}}|u_{k}-u|^2 {\rm ~d}\mu\right)\\\geq&
\left(\frac{1}{p}-\frac{1}{2\gamma}\right)\lambda_k M_1\left(\frac{\delta^2}{4}+o_k(1)\right)\\\rightarrow&\infty,\qquad k\rightarrow\infty.
\end{align*}
This is a contradiction. Hence for any $q>p$, $u_{k}\rightarrow u$ in $\ell^{q}(V).$ Similarly, we also have $v_{k}\rightarrow v$ in $\ell^{q}(V).$
Since $\frac{2N\gamma}{N+\alpha}>p$, we get that
$$
\begin{aligned}
 \int_{V} \left(R_\alpha\ast F(u_k-u,v_k-v)\right)F(u_k-u, v_k-v) \mathrm{~d} \mu \leq & C_{N,\alpha}\left(\int_V|F(u_k-u,v_k-v)|^{\frac{2N}{N+\alpha}}\mathrm{~d}\mu\right)^{\frac{N+\alpha}{N}} \\ \leq& C_{N,\alpha}\left(\int_V\left[M_{F}(|u_k-u|^{\gamma}+|v_k-v|^{\gamma})\right]^{\frac{2N}{N+\alpha}}\mathrm{~d}\mu\right)^{\frac{N+\alpha}{N}}\\ \leq& C_{N,\alpha,M_F}\left(\int_V\left(|u_k-u|^{\frac{2N\gamma}{N+\alpha}}+|v_k-v|^{\frac{2N\gamma}{N+\alpha}}\right)\mathrm{~d}\mu\right)^{\frac{N+\alpha}{N}}\\\leq &C_{N,\alpha,M_F}\left(\|u_k-u\|^{2\gamma}_{\frac{2N\gamma}{N+\alpha}}+\|v_k-v\|^{2\gamma}_{\frac{2N\gamma}{N+\alpha}}\right)\\ \rightarrow & 0,\qquad k\rightarrow\infty.
\end{aligned}
$$
Then it follows from Lemma \ref{pa} that
\begin{equation}\label{f2}
\lim\limits_{k\rightarrow\infty}\int_{V} \left(R_\alpha\ast F(u_k,v_k)\right)F(u_k, v_k) \mathrm{~d} \mu=\int_{V} \left(R_\alpha\ast F(u,v)\right)F(u, v) \mathrm{~d} \mu.
\end{equation}
This implies that $(u,v)\not\equiv (0,0)$ in $V$. Indeed, if $(u,v)\equiv(0,0)$ in $V$, since $(u_k,v_k)\in \mathcal{N}_{\lambda_k}$, by Lemma \ref{k9} (i), we get that
\begin{eqnarray*}
0=\lim\limits_{k\rightarrow\infty}\int_{V} \left(R_\alpha\ast F(u_k,v_k)\right)F(u_k, v_k) \mathrm{~d} \mu=\underset {k\rightarrow\infty}{\lim}
\left\|\left(u_{k}, v_{k}\right)\right\|_{\lambda_{k}}^{p}\geq\sigma^2>0,
\end{eqnarray*}
which is a contradiction. Hence $(u,v)\neq(0,0)$ in $\Omega\times\Omega$.

By the facts $u\equiv0 $ in $\Omega^c_a$, $v\equiv0 $ in $\Omega^c_b$ and the Fatou lemma, one has that
$$\begin{aligned}
\int_{\bar{\Omega}_{a} \cup \bar{\Omega}_{b}}\left(|\nabla u|^{p}+|\nabla v|^{p}\right) \mathrm{~d} \mu + \int_{\Omega_{a} \cup \Omega_{b}}\left(|u|^{p}+|v|^{p}\right) \mathrm{~d} \mu= &\int_V\left(|\nabla u|^p+|u|^p\right) \mathrm{~d}\mu+\int_V\left(|\nabla v|^p+|v|^p\right) \mathrm{~d}\mu \\ \leq &\liminf\limits_{k\rightarrow\infty}\int_V\left(|\nabla u_k|^p+(\lambda_ka+1)|u_k|^p\right) \mathrm{~d}\mu\\&+ \liminf\limits_{k\rightarrow\infty}\int_V\left(|\nabla v_k|^p+(\lambda_k b+1)|v_k|^p\right) \mathrm{~d}\mu\\=&\liminf\limits_{k\rightarrow\infty}\int_{V} \left(R_\alpha\ast F(u_k,v_k)\right)F(u_k, v_k) \mathrm{~d} \mu\\=&\int_{V} \left(R_\alpha\ast F(u,v)\right)F(u, v) \mathrm{~d} \mu\\=&\int_{\Omega_{a} \cup \Omega_{b}} \left(R_\alpha\ast F(u,v)\right)F(u, v) \mathrm{~d} \mu.
\end{aligned}$$
By Lemma \ref{l9}, there exists $t\in(0,1]$ such that $t(u, v) \in \mathcal{N}_{\Omega}$. Then combined with (\ref{f2}), we get that
$$
\begin{aligned}
J_{\Omega}(t u, t v)= & \left(\frac{1}{p}-\frac{1}{2\gamma}\right)\int_{\Omega_{a} \cup \Omega_{b}} \left(R_\alpha\ast F(tu,tv)\right)F(tu, tv) \mathrm{~d} \mu\\=&\left(\frac{1}{p}-\frac{1}{2\gamma}\right)\int_{V} \left(R_\alpha\ast F(tu,tv)\right)F(tu, tv) \mathrm{~d} \mu\\=&t^{2\gamma}\left(\frac{1}{p}-\frac{1}{2\gamma}\right)\int_{V} \left(R_\alpha\ast F(u,v)\right)F(u, v) \mathrm{~d} \mu\\= & t^{2\gamma}\lim_{k \rightarrow \infty}\left(\frac{1}{p}-\frac{1}{2\gamma}\right)\int_{V} \left(R_\alpha\ast F(u_k,v_k)\right)F(u_k, v_k) \mathrm{~d} \mu\\
=& t^{2\gamma}\lim_{k \rightarrow \infty} J_{\lambda_{k}}\left(u_{k}, v_{k}\right)\\\leq &l.
\end{aligned}
$$
Then we get that $m_\Omega\leq J_{\Omega}(tu)\leq l<m_\Omega.$ This is a contradiction. Hence 
$$
\lim _{\lambda \rightarrow \infty} m_{\lambda}=m_{\Omega}.
$$

\end{proof}

{\bf Proof of Theorem \ref{th4}.}
 We need to prove that for any sequence $\lambda_{k} \rightarrow \infty$, the corresponding ground state solution $(u_{k}, v_{k}) \in \mathcal{N}_{\lambda_{k}}$ satisfying $J_{\lambda_{k}}(u_{k}, v_{k})=m_{\lambda_{k}}$ converges in $W^{1,p}(V)\times W^{1,p}(V)$ to a ground state solution $(u, v)$ of the system (\ref{01}) along a subsequence. Since $\left\{\left(u_{\lambda_{k}}, v_{\lambda_{k}}\right)\right\}$ is bounded in $W^{1, p}(V) \times W^{1, p}(V)$, there exists $(u,v)\in  W^{1, p}(V) \times W^{1, p}(V)$ such that
$$(u_{k}, v_{k}) \rightharpoonup(u, v),\quad \text{in~}W^{1,p}(V)\times W^{1,p}(V),\qquad (u_{k}, v_{k}) \rightarrow(u, v),\quad \text{pointwise~in~}V.$$
Moreover, we get, from the proof of Lemma \ref{pb}, that $\left.u\right|_{\Omega_{a}^{c}} \equiv 0$, $\left.v\right|_{\Omega_{b}^{c}} \equiv 0$ and $u_k\rightarrow u, v_k\rightarrow v$ in $\ell^q(V)$ for $q>p$ with $(u,v)\neq 0$ in $\Omega\times\Omega$, and thus 
\begin{equation}\label{f4}
\lim\limits_{k\rightarrow\infty}\int_{V} \left(R_\alpha\ast F(u_k,v_k)\right)F(u_k, v_k) \mathrm{~d} \mu=\int_{V} \left(R_\alpha\ast F(u,v)\right)F(u, v) \mathrm{~d} \mu.
\end{equation}

Now, we prove that $(u, v)$ is a ground state solution to the system (\ref{01}). In fact, since $J_{\lambda_{k}}^{\prime}(u_{k}, v_{k})=0$, for any $\phi \in C_{c}(\Omega_{a})$, by using $(\phi, 0)$ as a test function, we have $\left\langle J_{\lambda_{k}}^{\prime}(u_{k}, v_{k}),(\phi, 0)\right\rangle=0$. Namely,
$$
\int_{V}\left(\left|\nabla u_{k}\right|^{p-2} \nabla u_{k} \nabla \phi+\left(\lambda_{k} a+1\right)\left|u_{k}\right|^{p-2} u_{k} \phi\right) \mathrm{~d} \mu=\frac{1}{\gamma} \int_{V} \left(R_\alpha\ast F(u_k,v_k)\right) F_{u}(u_k, v_k)\phi\mathrm{~d}\mu.
$$
Since $\phi=0$ on $\Omega_{a}^{c}$ and $a(x)=0$ on $\Omega_{a}$, we obtain that
$$\int_{\bar{\Omega}_{a}}\left|\nabla u_{k}\right|^{p-2} \nabla u_{k} \nabla \phi \mathrm{~d} \mu+\int_{\Omega_{a}}\left|u_{k}\right|^{p-2} u_{k} \phi \mathrm{~d} \mu=\frac{1}{\gamma} \int_{\Omega_{a}} \left(R_\alpha\ast F(u_k,v_k)\right) F_{u}(u_k, v_k)\phi\mathrm{~d}\mu.$$
Thus, as $k \rightarrow \infty$, the above equality turns to
$$
\int_{\bar{\Omega}_{a}}|\nabla u|^{p-2} \nabla u \nabla \phi \mathrm{~d} \mu+\int_{\Omega_{a}}|u|^{p-2} u \phi \mathrm{~d} \mu=\frac{1}{\gamma} \int_{\Omega_{a}} \left(R_\alpha\ast F(u,v)\right) F_{u}(u, v)\phi\mathrm{~d}\mu.
$$
Note that $|\nabla \phi|=0$ on $(\bar{\Omega}_{a})^{c}$, then we have
\begin{equation}\label{f5}
\int_{\bar{\Omega}_{a} \cup \bar{\Omega}_{b}}|\nabla u|^{p-2} \nabla u \nabla \phi \mathrm{~d} \mu+\int_{\Omega_{a} \cup \Omega_{b}}|u|^{p-2} u \phi \mathrm{~d} \mu=\frac{1}{\gamma} \int_{\Omega_{a} \cup \Omega_{b}} \left(R_\alpha\ast F(u,v)\right) F_{u}(u, v)\phi\mathrm{~d}\mu. 
\end{equation}

Similarly, for $\psi \in C_{c}(\Omega_{b})$, by using $(0, \psi)$ as a test function, we get
\begin{equation}\label{f6}
\int_{\bar{\Omega}_{a} \cup \bar{\Omega}_{b}}|\nabla v|^{p-2} \nabla v \nabla \psi \mathrm{~d} \mu+\int_{\Omega_{a} \cup \Omega_{b}}|v|^{p-2} v \psi \mathrm{~d} \mu=\frac{1}{\gamma} \int_{\Omega_{a} \cup \Omega_{b}} \left(R_\alpha\ast F(u,v)\right) F_{v}(u, v)\psi\mathrm{~d}\mu. 
\end{equation}
Then it follows from (\ref{f5}) and (\ref{f6}) that for any $(\phi, \psi) \in C_{c}\left(\Omega_{a}\right) \times C_{c}\left(\Omega_{b}\right)$, $\left\langle J_{\Omega}^{\prime}(u, v),(\phi, \psi)\right\rangle=0$. Hence, $(u, v)$ is a nontrivial critical point of $J_{\Omega}$, and hence $(u, v) \in$ $\mathcal{N}_{\Omega}$.

On the other hand, by the fact $(u_{k}, v_{k}) \in \mathcal{N}_{\lambda_{k}}$ and (\ref{f4}), we have
$$
\begin{aligned}
m_{\lambda_k}=&J_{\lambda_{k}}(u_{k}, v_{k}) \\
= & \left(\frac{1}{p}-\frac{1}{2\gamma}\right)\int_{V} \left(R_\alpha\ast F(u_k,v_k)\right) F(u_k, v_k)\mathrm{~d}\mu\\=&\left(\frac{1}{p}-\frac{1}{2\gamma}\right)\int_{V} \left(R_\alpha\ast F(u,v)\right) F(u, v)\mathrm{~d}\mu+o_k(1)\\=&\left(\frac{1}{p}-\frac{1}{2\gamma}\right)\int_{\Omega_{a} \cup \Omega_{b}} \left(R_\alpha\ast F(u,v)\right) F(u, v)\mathrm{~d}\mu+o_k(1)\\=&J_{\Omega}(u,v)+o_k(1).
\end{aligned}
$$
By Lemma \ref{pb}, we get that $J_{\Omega}(u, v)=m_{\Omega}$. Thus $(u, v)$ is a ground state solution to the system (\ref{01}). 

By the facts $(u_{k}, v_{k}) \in \mathcal{N}_{\lambda_{k}}$, $(u, v) \in \mathcal{N}_{\Omega}$, the Br\'{e}zis-Lieb lemma and (\ref{f4}), one concludes that
$$\begin{aligned}
& \|(u_k-u,v_k-v)\|^p_{\lambda_k}\\=&\int_{V}\left(|\nabla (u_k-u)|^{p}+(\lambda_ka+1)|u_k-u|^p\right) \mathrm{~d} \mu+ \int_{V}\left(|\nabla (v_k-v)|^{p}+(\lambda_kb+1)|v_k-v|^p\right) \mathrm{~d} \mu\\=&\int_{V}\left(|\nabla u_k|^{p}+(\lambda_ka+1)|u_k|^p\right) \mathrm{~d} \mu-\int_{V}\left(|\nabla u|^{p}+(\lambda_ka+1)|u|^p\right) \mathrm{~d} \mu+o_k(1)\\&+\int_{V}\left(|\nabla v_k|^{p}+(\lambda_kb+1)|v_k|^p\right) \mathrm{~d} \mu-\int_{V}\left(|\nabla v|^{p}+(\lambda_kb+1)|v|^p\right) \mathrm{~d} \mu+o_k(1)\\=&\int_{V} \left(R_\alpha\ast F(u_k,v_k)\right)F(u_k, v_k)  \mathrm{~d} \mu-\int_{V} \left(R_\alpha\ast F(u,v)\right)F(u, v)  \mathrm{~d} \mu+o_k(1)\\=&o_k(1). 
\end{aligned}$$
Hence we complete the proof.
\qed

\section{Acknowledgements}
The author would like to thank the referees for helpful comments and suggestions on this paper. Moreover, the author is supported by the National Natural Science Foundation of China, no.12401135.

\
\

{\bf Data availability :} Not applicable.

\
\

{\bf Declarations}

\
\

{\bf Conflict of interest:} The author declares that there are no conflicts of interests regarding the publication of
this paper.

\end{document}